\documentclass[11pt]{article}
\usepackage{amssymb}
\usepackage{amsbsy}
\usepackage[latin1]{inputenc}

\usepackage[dvipsnames,usenames]{xcolor}

\def\v        {{\boldsymbol v}}
\def\u        {{\boldsymbol u}}
\def\w        {{\boldsymbol w}}

\def\g        {{\boldsymbol g}}
\def\f        {{\boldsymbol f}}
\def\h        {{\boldsymbol h}}

\def\chi       {{\boldsymbol \xi}}

\newtheorem{theorem}{Theorem}%[section]

\newtheorem{proposition}[theorem]{Proposition}
\newtheorem{corollary}[theorem]{Corollary}
\newenvironment{proof}{{\bf  Proof}:}{\hfill $\square$\newline}

\newtheorem{remark}[theorem]{Remark}
\evensidemargin -6mm \oddsidemargin 2mm \textwidth 16cm \textheight
23cm \topmargin -10mm

\begin{document}

\title{On the time-dependent grade-two model for the magnetohydrodynamic flow: 2D case}

\author
{I.  Kondrashuk\thanks{Grupo de Matem\'atica Aplicada, Dpto. de Ciencias B\'asicas, Facultad de Ciencias,
Universidad del B\'{\i}o-B\'{\i}o, Campus Fernando May, Casilla 447, Chill\'an, Chile. 
E-mail: {\tt igor.kondrashuk@ubiobio.cl.}  I. K. was supported by Fondecyt (Chile) Grants Nos. 1040368,
1050512 and 1121030, by DIUBB (Chile) Grants Nos. 102609 and 121909 GI/C-UBB.}, E.A. Notte-Cuello\thanks{Dpto de Matem\'aticas, Facultad de Ciencias, Universidad de La Serena, La Serena, Chile. E-mail:
{\tt enotte@userena.cl.} This author's work was partially supported by project DIULS PR14151.}, M. Poblete-Cantellano\thanks{Dpto. de Matem\'atica, Facultad de Ingenier\'{\i}a, Universidad de Atacama, Copiap\'o, Chile. E-mail:
{\tt mpoblete@mat.uda.cl.} This author's work was partially supported by Universidad de Atacama, project DIUDA- 22256.} and M. A. Rojas-Medar\thanks{ Grupo de Matem\'atica
Aplicada, Dpto. de Ciencias B\'asicas, Facultad de Ciencias,
Universidad del B\'{\i}o-B\'{\i}o, Campus Fernando May, Casilla 447,
Chill\'an, Chile. E-mail: {\tt marko@ueubiobio.cl.} This work was
partially supported by project MTM2012-32325, Spain, Grant
1120260, Fondecyt-Chile and 121909 GI/C-UBB.}  }

%\date{}

\maketitle
\begin{abstract}
In this paper we discuss the MHD flow of a second grade fluid, in particular
we prove the existence and uniqueness of a weak solution of a time-dependent
grade two fluid model in a two-dimensional Lipschitz domain. We follow the
methodology of \cite{Girault}, i.e , we use a constructive method which can
be adapted to the numerical analysis of finite-element schemes for solving
this problem numerically.
\end{abstract}

\noindent{\bf Mathematics Subject Classification 2000:} 35Q35, 76N10, 35Q30, 76D05.

\noindent{\bf Keywords: }  Second grade fluid equations, magnetohydrodynamics.

\section{Introduction}
A fluid of grade two is a non-Newtonian fluid of differential type
introduced by Rivlin and Ericksen in \cite{Rivlin}. An analysis in \cite%
{Dunn} shows that the equation of a fluid of grade two is given by%
\begin{eqnarray*}
\frac{\partial }{\partial t}(\u-\alpha \Delta \u)-\nu \Delta 
\u+\sum_{j}\left( \u-\alpha \Delta \u\right)
_{j}\nabla u_{j}-\u\cdot \nabla \left( \u-\alpha \Delta 
\u\right) &=&-\nabla p+\f \\
{\rm div}\,\u &=&0
\end{eqnarray*}%
where $\alpha \geq 0$ is a constant of material, $\nu >0$ is the viscosity
of the fluid, $\u$ is the velocity field, and $p$ is pressure. For $%
\alpha =0$ the cl\'{a}ssical Navier-Stokes equation is obtained.

On the other hand, in several situations the motion of incompressible
electrical conducting fluid can be modeled by the magnetohydrodynamic
equation, which correspond to the Navier-Stokes equations coupled with the
Maxwell equations. In presence of a free motion of heavy ions, not directly
due to the electrical field (see Schluter $[4] $ and Pikelner $%
[3] ),$ \ the MHD equation can be reduced to 
\begin{equation}
\begin{array}{lll}
\displaystyle\frac{\partial \u}{\partial t}-\displaystyle\frac{\nu }{{\rho _{m}}}\Delta 
\u+ \u\cdot \nabla \u-\displaystyle\frac{\mu }{\rho _{m}}\h\cdot \nabla 
\h & = & \f-\displaystyle\frac{1}{\rho _{m}}\nabla ( p^{\ast }+%
\displaystyle\frac{\mu }{2}\h^{2}) \bigskip \\ 
\displaystyle\frac{\partial \h}{\partial t}-\displaystyle\frac{1}{{\mu \sigma }}\Delta 
\h+ \u\cdot \nabla \h-\h\cdot \nabla \u & = & -{\rm grad}\,%
\omega\bigskip \\ 
{\rm div}\,\u={ \rm div}\,\h=0 &  & 
\end{array}
\label{Ch1}
\end{equation}%
with 
\begin{equation}
\u\left\vert _{\partial \Omega }\right. =\h\left\vert
_{\partial \Omega }\right. =0.  \label{LS0}
\end{equation}%
Here, $\u$ and $\h$ are respectively the unknown velocity
and magnetic field; $p^{\ast }$ is the unknown hydrostatic pressure; $\omega$ is
an unknown function related to the heavy ions (in such way that the density
of electric current, $j_{0},$ generated by this motion satisfies the
relation ${\rm rot}\,j_{0}=-\sigma \nabla \omega)$ is the density of mass of
the fluid (assumed to be a positive constant); $\mu >0$ is the constant
magnetic permeability of the medium; $\sigma >0$ is the constant electric
conductivity; $\nu >0$ is the constant viscosity of the fluid; $\f$
is a given external force field.

In the case the MHD equation coupled with the equation of an incompressible
second grade fluid, the model can be write as%
\begin{equation}
\begin{array}{l}
\displaystyle\frac{\partial }{\partial t}(\u-\alpha \Delta \u)-\nu
\Delta \u+{\rm curl}\,\left( \u-\alpha \Delta \u
\right) \times \u-\left( \h\cdot \nabla \right) \h%
= \f-\nabla \left( p^{\ast }+\h^{2}\right) \bigskip \\ 
\displaystyle\frac{\partial \h}{\partial t}-\Delta \h+\left( \u%
\cdot \nabla \right) \h-\left( \h\cdot \nabla \right) 
\u=-{\rm grad}\,\omega\bigskip \\ 
{\rm div}\, \u= {\rm div}\,\h=0%
\end{array}
\label{M1}
\end{equation}%
with%
\begin{equation}
\u\left\vert _{\partial \Omega }\right. =\h\left\vert
_{\partial \Omega }\right. =0.  \label{MM1}
\end{equation}%
Note that when $\alpha =0$ we recover the model (\ref{Ch1}).

One of the first mathematical results for this model type appears in \cite%
{Hamdache}, they prove the existence and uniqueness of solutions for a small
time and global existence of solutions for small initial data in a
conducting domain of $\mathbb{R}^{3},$ based on the iterative scheme where
discretization is performed in the spatial variables. In this paper we
discuss the MHD flow of a second grade fluid, in particular we prove the
existence and uniqueness of a weak solution of a time-dependent grade two
fluid model in a two-dimensional Lipschitz domain, where we follow the
methodology of \cite{Girault}, i.e , we use semi-discretization in time and
the work is in a domain of $\mathbb{R}^{2}.$

\section{Preliminary results}

\subsection{Notation}

Let $\left( k_{1},k_{2}\right) $ denote a pair of non-negative integers, set 
$\left\vert k\right\vert =k_{1}+k_{2}$ and define the partial derivative $%
\partial ^{k}$ by 
\[
\partial ^{k}v=\frac{\partial ^{\left\vert k\right\vert }v}{\partial
x_{1}^{k_{1}}\partial x_{2}^{k_{2}}}. 
\]%
Then, for any non-negative integer $m$ and number $r\geq 1,$ recall the
classical Sobolev space%
\[
W^{m,r}(\Omega) =\left\{ v\in L^{r}(\Omega)
;\partial ^{k}v\in L^{r}(\Omega) \forall \left\vert
k\right\vert \leq m\right\} , 
\]%
equipped with the seminorm%
\[
\left\vert v\right\vert _{W^{m,r}(\Omega) }=\left[
\sum_{\left\vert k\right\vert =m}\int_{\Omega }\left\vert
\partial ^{k}v\right\vert ^{r}dx\right] ^{1/r}, 
\]%
and norm (for which it is a Banach space)%
\[
\left\Vert v\right\Vert _{W^{m,r}(\Omega) }=\left[
\sum_{0\leq \left\vert k\right\vert \leq m}\int_{\Omega
}\left\vert v\right\vert _{W^{k,r}(\Omega) }^{r}\right] ^{1/r}, 
\]%
with the usual extension when $r=\infty .$ When $r=2,$ this space is the
Hilbert space $H^{m}(\Omega) .$ The definitions of these spaces
are extended straightforwardly to vectors, with the same notation, but with
the following modification for the norms in the non-Hilbert case. Let 
$\u=(u_{1},u_{2}) ;$ then we set%
\[
\left\Vert \u\right\Vert _{L^{r}(\Omega) }=\left[
\int_{\Omega }\left\Vert \u(x)
\right\Vert ^{r}\right] ^{1/r}, 
\]%
where $\left\Vert\cdot \right\Vert $ denotes the Euclidean vector
norm.

For functions that vanish on the boundary, we define for any $r\geq 1,$%
\[
W_{0}^{1,r}(\Omega) =\left\{ v\in W^{1,r}(\Omega)
;v\left\vert _{\partial \Omega }=0\right. \right\} 
\]%
and recall Poincar\'{e}'s inequality, there exists a constant $\mathcal{P}$
such that 
\begin{equation}
\forall v\in H_{0}^{1}(\Omega) ,\qquad \left\Vert v\right\Vert
_{L^{r}(\Omega) }\leq \mathcal{P}\left\vert v\right\vert
_{H^{1}(\Omega) }.  \label{N1}
\end{equation}%
More generally, recall the inequalities of Sobolev embeddings in two
dimension, for each $r\in \left[ 2,\infty \right) ,$ there exists a constant 
$S_{r}$ such that%
\begin{equation}
\forall v\in H_{0}^{1}(\Omega) ,\qquad \left\Vert v\right\Vert
_{L^{r}(\Omega) }\leq \mathcal{S}_{r}\left\vert v\right\vert
_{H^{1}(\Omega) }.  \label{N2}
\end{equation}%
The case $r=\infty $ is excluded and is replaced by, for any $r>2$ there
exists a constant $M_{r}$ such that%
\begin{equation}
\forall v\in W_{0}^{1,r}(\Omega) ,\qquad \left\Vert
v\right\Vert _{L^{\infty }(\Omega) }\leq \mathcal{M}%
_{r}\left\vert v\right\vert _{W^{1,r}(\Omega) }.  \label{N3}
\end{equation}%
Owing to (\ref{N1}), we use the seminorm $\left\vert \cdot \right\vert
_{H^{1}(\Omega) }$ as a norm on $H_{0}^{1}(\Omega) 
$ and we use it to define the norm of the dual space $H^{-1}( \Omega) $ of $H_{0}^{1}(\Omega) :$%
\[
\left\Vert f\right\Vert _{H^{-1}(\Omega) }=\sup_{v\in
H_{0}^{1}(\Omega) }\frac{\left\langle f,v\right\rangle }{%
\left\vert v\right\vert _{H^{1}(\Omega) }}. 
\]%
In addition to the $H^{1}$ norm, it will be convenient to define the
following norm with the parameter $\alpha :$%
\[
\left\Vert v\right\Vert _{\alpha }=\left( \left\Vert v\right\Vert
_{L^{2}(\Omega) }^{2}+\alpha \left\vert v\right\vert
_{H^{1}(\Omega) }^{2}\right) ^{1/2}. 
\]%
In the following, we denote by $\Vert \cdot \Vert $ the $L^2$ norm.

We shall also use the standard space for incompressible flow:%
\[
\begin{array}{l}
H\left({\rm div};\Omega \right) =\left\{ v\in L^{2}(\Omega)
^{2};{\rm div}\, v\in L^{2}(\Omega) \right\} \bigskip \\ 
H\left({\rm curl};\Omega \right) =\left\{ v\in L^{2}(\Omega)
^{2};{\rm curl}\,v\in L^{2}(\Omega) \right\} \bigskip \\ 
V=\left\{ v\in H_{0}^{1}(\Omega) ^{2};{\rm div}\, v=0\,{\rm  in}\,
\Omega \right\} \bigskip \\ 
V^{\perp }=\left\{ v\in H_{0}^{1}(\Omega) ^{2};\forall w\in
V,( \nabla v,\nabla w) =0\right\} \bigskip \\ 
L_{0}^{2}(\Omega) =\left\{ v\in L^{2}(\Omega)
;\int_{\Omega }qdx=0\right\}%
\end{array}%
\]%
and the space transport:%
\[
X_{v}=\left\{ f\in L^{2}\left( \Omega \right) ;v\cdot \nabla f\in
L^{2}\left( \Omega \right) \right\} , 
\]%
where $v$ is a given velocity in $H^{1}\left( \Omega \right) ^{2}.$

\subsection{Auxiliary theoretical results}

To analize, we shall use the following results. The first theorem concerns
the divergence operator in any dimension $d.$ Its proof can be found for
instance in Girault and Raviart \cite{Girault2}.

\begin{theorem}
Let $\Omega $ be a bounded Lipschitz-continuous domain of $\mathbb{R}^{d}.$
The divergence operator is an isomorphism from $V^{\perp }$ onto $%
L_{0}^{2}\left( \Omega \right) $ and there exists a constant $\beta >0$ such
that for all $f\in L_{0}^{2}\left( \Omega \right) ,$ there exists a unique $%
v\in V^{\perp }$ satisfying%
\[
{\rm div}\, v=f\quad {\rm in }\,\,\Omega \quad {\rm and}\quad \left\Vert
v\right\Vert _{H^{1}(\Omega) }\leq \frac{1}{\beta }\left\Vert
f\right\Vert . 
\]
\end{theorem}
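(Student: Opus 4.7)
The plan is to establish this as a consequence of the classical surjectivity of the divergence operator from $H_0^1(\Omega)^d$ onto $L_0^2(\Omega)$ (Bogovski\u\i's theorem / the inf-sup or LBB condition), and then use the Hilbert space structure of $H_0^1(\Omega)^d$ to restrict to the orthogonal complement of the kernel.

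First, I would recall (or reprove) the basic surjectivity statement: for any $f\in L_0^2(\Omega)$ there exists $w\in H_0^1(\Omega)^d$ with $\mathrm{div}\, w = f$ and $\|w\|_{H^1(\Omega)} \le C\|f\|_{L^2(\Omega)}$. For a bounded Lipschitz domain this is proved by localizing with a partition of unity subordinate to a finite covering whose members are star-shaped with respect to a ball (which exists by the Lipschitz hypothesis on $\partial\Omega$); on each star-shaped piece one uses Bogovski\u\i's explicit integral right inverse, while the zero-mean constraint is handled by redistributing mass via a smooth test function between overlapping patches. Equivalently, one can state this surjectivity as the inf-sup condition
\[
\inf_{q\in L_0^2(\Omega)\setminus\{0\}}\;\sup_{v\in H_0^1(\Omega)^d\setminus\{0\}}\frac{(\mathrm{div}\, v,q)}{|v|_{H^1(\Omega)}\,\|q\|_{L^2(\Omega)}}\;\ge\;\beta>0.
\]

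Second, I would exploit the fact that the kernel of $\mathrm{div}:H_0^1(\Omega)^d\to L^2(\Omega)$ is exactly $V$, which is a closed subspace of the Hilbert space $H_0^1(\Omega)^d$. Thus $H_0^1(\Omega)^d = V \oplus V^{\perp}$ orthogonally with respect to the inner product $(\nabla\cdot,\nabla\cdot)$, and the restriction $\mathrm{div}\big|_{V^\perp}:V^{\perp}\to L_0^2(\Omega)$ is both injective (since its kernel is $V\cap V^{\perp}=\{0\}$) and surjective (decompose any preimage $w$ as $w=w_0+w^\perp$ with $w_0\in V$, $w^\perp\in V^\perp$; then $\mathrm{div}\, w^\perp=\mathrm{div}\, w=f$). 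By the bounded inverse theorem, this restriction is a topological isomorphism.

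Third, to obtain the quantitative bound with constant $1/\beta$, I would note that among all preimages of $f$ in $H_0^1(\Omega)^d$, the one lying in $V^{\perp}$ has the smallest $H^1$-seminorm (orthogonal projection minimizes distance to the origin in $V^{\perp}$ direction). Combined with the inf-sup constant, standard duality (writing $|v|_{H^1}= \sup_q (\mathrm{div}\, v,q)/\|q\|_{L^2}$ for $v\in V^{\perp}$, which equals $\|\mathrm{div}\, v\|_{L^2}$ up to the factor $1/\beta$) yields $|v|_{H^1(\Omega)}\le \beta^{-1}\|f\|_{L^2(\Omega)}$, which is the required estimate.

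The main obstacle is the surjectivity step in a general Lipschitz (not smooth, not even star-shaped) domain: Bogovski\u\i's formula is local, and gluing the local right inverses while preserving the zero-mean constraint globally requires the partition-of-unity/redistribution argument sketched above. All the remaining steps are soft functional-analytic consequences of Hilbert space geometry and the closed range/open mapping theorem, so once surjectivity with a continuous right inverse is in hand, the isomorphism onto $V^{\perp}$ with the stated constant $\beta$ follows cleanly.
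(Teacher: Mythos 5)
The paper does not prove this theorem at all: it is quoted as a known result, with the proof deferred to Girault and Raviart \cite{Girault2}. Your sketch is correct and is essentially the standard argument found there -- surjectivity of the divergence with a bounded right inverse (equivalently the inf-sup condition), followed by the orthogonal splitting $H_0^1(\Omega)^d=V\oplus V^{\perp}$ and the observation that the minimal-seminorm preimage of $f$ lies in $V^{\perp}$ -- so there is nothing to contrast with the paper's (nonexistent) proof; the only slight imprecision is the line $|v|_{H^1}=\sup_q(\mathrm{div}\,v,q)/\|q\|_{L^2}$, which should instead be the statement that the inverse of $\mathrm{div}|_{V^{\perp}}$ has operator norm at most $1/\beta$, as follows from the inf-sup condition.
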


The second result concerns the regularity of the Stokes operator in two
dimensions, see \cite{Grisvard}.

\begin{theorem}
Let $\Omega $ be a bounded polygon in the plane.

\begin{enumerate}
\item For each $r\in ] 1,4/3[ ,$ the Stokes operator is an
isomorphism from 
$$
\left[ \left( W^{2,r}(\Omega) \right) ^{2}\cap V\right] \times %
\left[ W^{1,r}(\Omega) \cap L_{0}^{2}(\Omega) %
\right] \quad {\rm onto}\quad L^{r}(\Omega) ^{2}, 
$$
i.e. for each $f\in L^{r}(\Omega) ^{2},$ there exists a
constant $C_{r}$ and a unique pair 
\[
\left( \u,p\right) \in \left[ \left( W^{2,r}(\Omega)
\right) ^{2}\cap V\right] \times \left[ W^{1,r}(\Omega) \cap
L_{0}^{2}(\Omega) \right] 
\]
such that%
\[
- v\Delta \u+\nabla p=\f,\qquad {\rm div}\,\u=0\, {\rm
in }\, \Omega, \,\u=0\,{\rm on }\, \partial \Omega , 
\]%
and%
\[
\left\vert \mathbf{u}\right\vert _{W^{2,r}\left( \Omega \right) }+\left\vert
p\right\vert _{W^{1,r}\left( \Omega \right) }\leq C_{r}\left\Vert
f\right\Vert _{L^{r}\left( \Omega \right) }. 
\]

\item If in addition, $\Omega $ is a convex polygon, then the Stokes
operator is an isomorphism from $\left[ \left( H^{2}(\Omega)
\right) ^{2}\cap V\right] \times \left[ H^{1}(\Omega) \cap
L_{0}^{2}(\Omega) \right] $ onto $L_{0}^{2}(\Omega
) ^{2}.$ Furthermore, there exists a real number $r>2,$ depending on
the largest inner angle of $\partial \Omega $ such that for all $t\in [
2,r] ,$ the Stokes operator is an isomorphism from $\left[ \left(
W^{2,t}(\Omega) \right) ^{2}\cap V\right] \times \left[
W^{1,t}(\Omega) \cap L_{0}^{2}(\Omega) \right] $
onto $L^{t}(\Omega) ^{2}.$
\end{enumerate}
\end{theorem}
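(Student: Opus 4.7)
The plan is to exploit the special structure of two-dimensional incompressible flow and convert the Stokes system into a biharmonic problem, for which Grisvard has a sharp regularity theory on polygonal domains. Assuming for definiteness that $\Omega$ is simply connected, since $\mathrm{div}\,\u=0$ and $\u$ vanishes on $\partial\Omega$, there exists a unique stream function $\psi\in H^{2}(\Omega)$ with $\psi=\partial_{n}\psi=0$ on $\partial\Omega$ and $\u=\mathrm{curl}\,\psi$. Taking the scalar curl of the momentum equation kills the pressure and produces the clamped biharmonic problem
\[
\nu\,\Delta^{2}\psi=\mathrm{curl}\,\f \mbox{ in }\Omega,\qquad \psi=\partial_{n}\psi=0 \mbox{ on }\partial\Omega.
\]
Once $\psi$ is controlled in $W^{4,r}(\Omega)$, the velocity $\u=\mathrm{curl}\,\psi$ automatically lies in $W^{2,r}(\Omega)^{2}\cap V$, and the pressure is recovered algebraically from $\nabla p=\f+\nu\Delta\u\in L^{r}(\Omega)^{2}$ and normalised to zero mean, giving the announced bound.

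The heart of the argument is Grisvard's sharp regularity theory for the clamped biharmonic operator in a polygon. Near each vertex of opening $\omega_{j}$, the solution admits a Kondratiev-type decomposition $\psi=\psi_{\mathrm{reg}}+\sum_{k}c_{j,k}\,r^{\lambda_{j,k}}\Phi_{j,k}(\theta)$, where the exponents $\lambda_{j,k}$ are the non-trivial roots of a transcendental equation depending only on $\omega_{j}$. The $W^{4,r}$ estimate holds whenever the Sobolev order $4-2/r$ lies strictly below $1+\mathrm{Re}\,\lambda_{\min}$, with $\lambda_{\min}$ the smallest such root taken over all vertices. For part~(1), the restriction $r<4/3$ forces $4-2/r<5/2$, which is admissible for \emph{every} admissible $\omega_{j}\in(0,2\pi)$, so the isomorphism holds on an arbitrary polygon. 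For part~(2), the convexity assumption $\omega_{j}<\pi$ guarantees $\mathrm{Re}\,\lambda_{\min}>1$, which makes the order $3$ admissible (so $\f\in L^{2}_{0}(\Omega)$ yields $\psi\in H^{4}$ and $\u\in H^{2}$); an interpolation between $r=2$ and a value $t>2$ determined by the largest inner angle then produces the full range $[2,r]$.

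The main technical obstacle is precisely this singularity analysis at the corners: one must identify, for each $\omega_{j}$, the admissible range of exponents, glue local estimates through a smooth partition of unity subordinate to a neighbourhood of each vertex and an interior patch (where classical smooth-domain theory applies), and carry the constants through so as to obtain the uniform bound $|\u|_{W^{2,r}(\Omega)}+|p|_{W^{1,r}(\Omega)}\leq C_{r}\|\f\|_{L^{r}(\Omega)}$. This is accomplished on the model infinite wedge via Mellin transforms in Kondratiev weighted spaces. Once the local estimates are available, bijectivity in both parts follows from the open mapping theorem and injectivity: the energy identity in $V$ gives $\u=0$, after which the momentum equation determines $p$ up to its mean, which is fixed by the $L_{0}^{2}$ normalisation.
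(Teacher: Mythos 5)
First, a point of comparison: the paper does not prove this statement at all. It is quoted in the ``Auxiliary theoretical results'' subsection as a known regularity theorem for the Stokes operator on polygons, with a citation to Grisvard's monograph, so there is no in-paper proof to measure your argument against. Your outline -- reduce the two-dimensional Stokes system to the clamped biharmonic problem for the stream function, run the Kondratiev/Mellin corner analysis on model wedges, glue with a partition of unity, and recover bijectivity from the open mapping theorem plus the energy identity -- is the classical route taken in that literature, so the strategy itself is sound and appropriate.

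There is, however, a concrete gap in the regularity bookkeeping that breaks the numerology of part (1). With only $\mathbf{f}\in L^{r}(\Omega)^{2}$, taking the scalar curl of the momentum equation gives $\nu\Delta^{2}\psi=\mathrm{curl}\,\mathbf{f}\in W^{-1,r}(\Omega)$, \emph{not} $L^{r}(\Omega)$; the clamped biharmonic problem then delivers at best $\psi\in W^{3,r}(\Omega)$, which is exactly what is needed for $\mathbf{u}=\mathrm{curl}\,\psi\in W^{2,r}(\Omega)^{2}$, but your claim of $W^{4,r}$ control is unobtainable from the stated hypotheses. This matters because your corner-admissibility condition inherits the error: with $\psi\sim r^{1+\lambda}$ near a vertex, membership in $W^{3,r}$ requires $\mathrm{Re}\,\lambda_{\min}>2-2/r$, which for $r<4/3$ asks only $\mathrm{Re}\,\lambda_{\min}>1/2-\varepsilon$ and is indeed satisfied for every opening in $(0,2\pi)$, whereas your condition $4-2/r<1+\mathrm{Re}\,\lambda_{\min}$ would demand $\mathrm{Re}\,\lambda_{\min}>3/2$, which fails at reentrant corners -- so part (1) would not follow as written. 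The corrected threshold also explains part (2) cleanly (convexity gives $\mathrm{Re}\,\lambda_{\min}>1>2-2/t$ for $t$ up to some $r>2$ determined by the largest angle), without any need for interpolation. Two smaller caveats: the simple-connectedness of $\Omega$ should be justified or the stream-function construction adapted to handle polygonal holes, and one should note explicitly that the singular exponents of the Dirichlet--Stokes operator coincide with those of the clamped plate in two dimensions, which is the fact that legitimises the reduction.
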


The next result concerns the unique solvability of the steady transport
equation in any dimension $d$, see \cite{Girault3}.

\begin{theorem}
Let $\Omega $ be a bounded Lipschitz-continuous domain of $\mathbb{R}^{d}$
and let $\u$ be a given velocity in $V.$

\begin{enumerate}

\item For every $f$ in $L^{2}(\Omega) $ and every constant $%
\gamma >0,$ the transport equation%
\[
z+\gamma \u\cdot \nabla z=f\qquad {\rm in } \,\,\Omega , 
\]%
has a unique solution $z\in X_{\u}$ and 
\begin{equation}
\left\Vert z\right\Vert \leq \left\Vert
f\right\Vert   \label{N4}
\end{equation}

\item The following Green's formula holds:%
\begin{equation}
\forall z,\theta \in X_{\u},\qquad \left( \u\cdot \nabla
z,\theta \right) =-\left( \u\cdot \nabla \theta ,z\right) .
\label{N5}
\end{equation}
\end{enumerate}
\end{theorem}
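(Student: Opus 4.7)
The plan is to handle uniqueness and the $L^2$ bound (\ref{N4}) together via a standard energy argument, then build existence by a vanishing viscosity regularization, and conclude with Green's formula (\ref{N5}) by density.

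First, for \textbf{uniqueness and the bound (\ref{N4})}, given any $z \in X_{\u}$ solving the transport equation, I would test formally by $z$ and integrate over $\Om$. Since $\u \in V$, one has ${\rm div}\,\u = 0$ in $\Om$ and $\u = 0$ on $\partial\Om$, so an integration by parts yields $(\u\cdot\n z, z) = -\frac{1}{2}\int_\Om z^2\,{\rm div}\,\u\,dx = 0$. Hence $\|z\|^2 = (f,z) \le \|f\|\,\|z\|$, proving (\ref{N4}); linearity then gives uniqueness.

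Next, for \textbf{existence}, for $\epsilon > 0$ I consider the elliptic regularization
\[
z_\epsilon + \gamma\,\u\cdot\n z_\epsilon - \epsilon\,\Delta z_\epsilon = f \text{ in } \Om, \qquad z_\epsilon|_{\partial\Om} = 0.
\]
The associated bilinear form on $H^1_0(\Om)$ is coercive because the convection term is skew-symmetric thanks to $\u \in V$, so Lax-Milgram provides a unique $z_\epsilon \in H^1_0(\Om)$. Testing by $z_\epsilon$ gives $\|z_\epsilon\|^2 + \epsilon\,|z_\epsilon|^2_{H^1(\Om)} \le \|f\|\,\|z_\epsilon\|$, whence $\|z_\epsilon\| \le \|f\|$ and $\epsilon^{1/2}|z_\epsilon|_{H^1(\Om)} \le \|f\|$ uniformly in $\epsilon$. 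Extracting a weak limit $z_\epsilon \rightharpoonup z$ in $L^2(\Om)$, I pass to the limit in the distributional formulation against any $\varphi \in \mathcal{D}(\Om)$: the convection term transposes as $(\u\cdot\n z_\epsilon,\varphi) = -(z_\epsilon, \u\cdot\n\varphi)$ (using ${\rm div}\,\u=0$), and the viscous term is bounded by $\epsilon\,|z_\epsilon|_{H^1}|\varphi|_{H^1} \le \epsilon^{1/2}\|f\|\,|\varphi|_{H^1} \to 0$. This yields $z + \gamma\,\u\cdot\n z = f$ in $\mathcal{D}'(\Om)$; since $f, z \in L^2(\Om)$ this forces $\u\cdot\n z \in L^2(\Om)$, so $z \in X_{\u}$ and the equation holds pointwise a.e.

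Finally, for \textbf{Green's formula (\ref{N5})}, when $z,\theta \in \mathcal{D}(\Om)$ integration by parts combined with ${\rm div}\,\u = 0$ and $\u|_{\partial\Om}=0$ gives $(\u\cdot\n z,\theta) = -(\u\cdot\n\theta,z)$ directly. I would then extend to arbitrary $z,\theta \in X_{\u}$ by approximating each by a sequence in $\mathcal{D}(\Om)$ converging in the graph norm $\bigl(\|\cdot\|^2 + \|\u\cdot\n(\cdot)\|^2\bigr)^{1/2}$ of $X_{\u}$, and passing to the limit in every term.

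\textbf{Main obstacle.} The crucial ingredient is the density of $\mathcal{D}(\Om)$ in $X_{\u}$ for its graph norm, which both legitimizes the integration by parts in the energy estimate and underwrites the extension of Green's formula. The classical route is a mollification argument \`a la DiPerna--Lions, where one controls the commutator between $\u\cdot\n$ and a regularizing convolution. For this, $\u \in H^1(\Om)^2$ (which in two dimensions embeds in every $L^p$ with $p<\infty$), the divergence-free condition, and the vanishing boundary condition for $\u$ are all essential to avoid spurious boundary terms and to drive the commutator to zero. Once this density property is secured, the rest of the argument is routine.
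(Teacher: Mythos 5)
First, note that the paper itself offers no proof of this theorem: it is stated as a known auxiliary result and attributed to Girault and Scott \cite{Girault3}, so there is no internal argument to compare yours against. Your reconstruction nevertheless follows what is essentially the route of that reference: skew-symmetry of the convection term for the bound (\ref{N4}) and uniqueness, an elliptic (vanishing-viscosity) regularization plus Lax--Milgram for existence, and a mollification/commutator argument to justify the formal integration by parts and the Green formula (\ref{N5}). The outline is sound, and you correctly identify that everything hinges on making $(\u\cdot\nabla z,z)=0$ rigorous for $z$ merely in $X_{\u}$.

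The one place where your sketch is thinner than the difficulty warrants is the density claim. In the borderline case $\u\in H^{1}(\Omega)^{d}$, $z\in L^{2}(\Omega)$, the DiPerna--Lions commutator $[\u\cdot\nabla,\rho_{\varepsilon}]z$ tends to $0$ only in $L^{1}_{loc}$, not in $L^{2}$; consequently $\mathcal{D}(\Omega)$ is not obviously dense in $X_{\u}$ for the graph norm, and you cannot simply pair the commutator with a test function $\theta$ that is only in $L^{2}$. The usual repair is to prove first the special case $(\u\cdot\nabla w,w)=0$ for every $w\in X_{\u}$ by renormalization: apply the commutator lemma to bounded truncations $\beta_{M}(w)$, integrate $\u\cdot\nabla\beta_{M}(w)$ over $\Omega$ using ${\rm div}\,\u=0$ and the vanishing normal trace of $\u$, and let $M\to\infty$; the general identity (\ref{N5}) then follows by polarization, applying this to $z\pm\theta$. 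With that modification your argument closes. Two smaller remarks: the boundedness of the trilinear form $(\u\cdot\nabla z,w)$ on $H^{1}\times H^{1}_{0}\times H^{1}_{0}$ in the Lax--Milgram step uses the Sobolev embedding and hence requires $d\le 4$ (harmless here, since the paper works in $d=2$); and the weak $L^{2}$ limit of $z_{\varepsilon}$ already inherits $\left\Vert z\right\Vert \le\liminf\left\Vert z_{\varepsilon}\right\Vert \le\left\Vert f\right\Vert$, so (\ref{N4}) for the constructed solution does not need the Green formula --- only the uniqueness assertion does.
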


Finally, the last result establishes compact embeddings in space and time.
Its, proof, due to Simon, see \cite{Simon}.

\begin{theorem}[Simon]
Let $X,E,Y$ be three Banach spaces with continuous embeddings: $X\subset
E\subset Y,$ the imbedding of $X$ into $E$ being compact. Then for any
number $q\in [1,\infty] ,$ the space%
\begin{equation}
\left\{ v\in L^{q}(0,T;X) ;\frac{\partial v}{\partial t}\in
L^{1}(0,T;Y) \right\}  \label{N6}
\end{equation}%
is compactly imbedded in $L^{q}(0,T;E) .$
\end{theorem}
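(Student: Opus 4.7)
The plan is to use the classical Aubin--Lions--Simon strategy: combine an Ehrling interpolation lemma with a time equicontinuity estimate derived from the $L^1(0,T;Y)$ bound on $\partial_t v$, and conclude via a Kolmogorov--Riesz compactness criterion. Given a bounded sequence $\{v_n\}$ in the space defined by (\ref{N6}), the goal is to extract a subsequence convergent in $L^q(0,T;E)$.

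First, I would establish the Ehrling inequality: for every $\eta > 0$ there exists $C_\eta > 0$ such that $\|w\|_E \leq \eta \|w\|_X + C_\eta \|w\|_Y$ for all $w \in X$. This is proved by contradiction: a failing sequence $\{w_n\}$ can be normalized in $E$, is bounded in $X$, and tends to zero in $Y$; the compact embedding $X \hookrightarrow E$ forces a subsequence to converge in $E$ to a limit that is both nonzero in $E$ and zero in $Y$, contradicting the continuous embedding $E \hookrightarrow Y$. Applying this pointwise in time and raising to the $L^q$ norm yields
$$\|v_n - v_m\|_{L^q(0,T;E)} \leq \eta \, \|v_n - v_m\|_{L^q(0,T;X)} + C_\eta \, \|v_n - v_m\|_{L^q(0,T;Y)},$$
and since the first factor is uniformly bounded, it suffices to produce a subsequence that is Cauchy in $L^q(0,T;Y)$.

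Second, I would exploit that $v_n(t+h) - v_n(t) = \int_t^{t+h} \partial_s v_n(s)\, ds$ in $Y$ to obtain
$$\int_0^{T-h} \|v_n(t+h) - v_n(t)\|_Y\, dt \leq h \, \|\partial_t v_n\|_{L^1(0,T;Y)},$$
which is uniform in $n$ and vanishes as $h \to 0$. Interpolating this $L^1$ translation estimate with the uniform $L^q$ bound in $Y$ (inherited from the $L^q(0,T;X)$ bound via $X \hookrightarrow Y$) upgrades equicontinuity to $L^q$ for any finite $q$. The relative compactness in $Y$ of the time averages $\frac{1}{h}\int_t^{t+h} v_n(s)\,ds$, which follows from a uniform bound in $X$ on these averages together with the compactness $X \hookrightarrow E \hookrightarrow Y$, closes a Kolmogorov--Riesz-type argument and yields relative compactness of $\{v_n\}$ in $L^q(0,T;Y)$, which by the Ehrling bound transfers to $L^q(0,T;E)$.

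The main obstacle is the endpoint $q = \infty$, where time translations no longer approximate the identity in $L^\infty$; I would treat this case separately by observing that $\partial_t v \in L^1(0,T;Y)$ combined with $v \in L^\infty(0,T;X)$ forces $v$ to lie in $C([0,T];Y)$ with values bounded in $X$, and then applying Arzel\`a--Ascoli in $C([0,T];E)$, using the Ehrling inequality to convert pointwise precompactness in $Y$ into uniform precompactness in $E$. The delicate technical point throughout is coordinating the parameters $\eta$, $h$, and the subsequence indices in a diagonal extraction so that all error terms go to zero in the right order; this is standard but requires care, and is where most of the bookkeeping of the proof resides.
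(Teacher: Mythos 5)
The paper offers no proof of this theorem: it is quoted as a known result and the proof is delegated to Simon's article \cite{Simon}, so there is nothing internal to compare against, and your outline must be judged on its own. Your strategy (Ehrling inequality, reduction to compactness in $L^{q}(0,T;Y)$, translation estimates from the $L^{1}(0,T;Y)$ bound on $\partial_{t}v$, a Kolmogorov--Riesz criterion) is indeed the standard one and is essentially Simon's own. But two steps do not go through as written. First, interpolating the translation estimate $\int_{0}^{T-h}\|v_{n}(t+h)-v_{n}(t)\|_{Y}\,dt\le h\,\|\partial_{t}v_{n}\|_{L^{1}(0,T;Y)}$ against a uniform bound in $L^{q}(0,T;Y)$ only gives smallness of the translations in $L^{p}(0,T-h;Y)$ for $p<q$: the interpolation exponent degenerates at $p=q$, so the $L^{1}$ smallness contributes nothing there. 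As written, your argument proves compactness into $L^{p}(0,T;E)$ for every $p<q$ --- which is exactly the conclusion of Simon's Corollary 4 --- but not into $L^{q}(0,T;E)$. To reach the exponent $q$ itself (for $q<\infty$) you must first note that $v\in L^{q}(0,T;Y)$ together with $\partial_{t}v\in L^{1}(0,T;Y)$ forces a uniform bound in $L^{\infty}(0,T;Y)$ (write $v(t)=v(s)+\int_{s}^{t}\partial_{\tau}v\,d\tau$ and average over $s$), and then interpolate the $L^{1}$ translation estimate against $L^{\infty}$ rather than against $L^{q}$.

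Second, the endpoint $q=\infty$ cannot be rescued by Arzel\`a--Ascoli: a bound on $\partial_{t}v_{n}$ in $L^{1}(0,T;Y)$ does not give uniform equicontinuity of $t\mapsto v_{n}(t)$ in $Y$, since the $L^{1}$ mass of the derivatives may concentrate. In fact the statement is false at $q=\infty$ as literally written: take $X=E=Y=\mathbb{R}$, $T=1$, and $v_{n}(t)=\phi(nt)$ with $\phi\in C_{c}^{\infty}(1/2,1)$; then $\|v_{n}\|_{L^{\infty}}$ and $\|v_{n}'\|_{L^{1}}$ are independent of $n$, yet $\|v_{n}-v_{m}\|_{L^{\infty}}\ge\|\phi\|_{L^{\infty}}$ whenever $m\ge 2n$ because the supports become disjoint, so no subsequence converges in $L^{\infty}(0,T;E)$. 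Simon's theorem gives, for $q=\infty$, compactness into $L^{p}(0,T;E)$ for every $p<\infty$, and compactness into $C([0,T];E)$ only under the stronger hypothesis $\partial_{t}v\in L^{r}(0,T;Y)$ with $r>1$. This imprecision is inherited from the way the theorem is quoted here (following \cite{Girault}); it is harmless for the application in this paper, which only uses strong convergence in $L^{2}(0,T;L^{4}(\Omega)^{2})$, but your proof of the endpoint case has a genuine hole.
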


\subsection{Formulation of the problem}

Let $[0,T] $ be a time interval for some positive time $T,$ let $%
\Omega $ be an domain in two dimensions, with a Lipschitz-continuous
boundary $\partial \Omega $ and let $\mathbf{n}$ denote the unit normal to $%
\partial \Omega ,$ pointing outside $\Omega .$ Let $\f\in %
L^{2}(0,T;H({\rm curl};\Omega)) ,$ the initial
velocities $\u_{0}, \h_0\in V$ with ${\rm curl}\,(\u_{0}-\alpha
\Delta \u_{0}) \in L^{2}(\Omega) ,$ and
we expect the velocity $\u\in L^{\infty }(0,T;V) $ with $%
\partial \u/\partial t\in L^{2}(0,T;V) $, the magnetic field $\h \in L^\infty(0,t;V)$ with $\partial \h/\partial t \in L^{2}(0,T;V) $, and
the pressures $p, \omega\in L^{2}(0,T;L_{0}^{2}(\Omega)) .$

The system (\ref{M1}) can be rewritten by introducing the auxiliary variable 
$z={\rm curl}\,( \u-\alpha \Delta \u) ,$ as%
\begin{equation}
\begin{array}{l}
\displaystyle\frac{\partial }{\partial t}(\u-\alpha \Delta \u)-\nu
\Delta \u+z\times \u- (\h\cdot \nabla) 
\h=\f-\nabla \left( p^{\ast }+\h^{2}\right) \bigskip \\ 
\displaystyle\frac{\partial \h}{\partial t}-\Delta \h+\left( \u
\cdot \nabla \right) \h-\left( \h\cdot \nabla \right) 
\u=-{\rm grad}\,\omega\bigskip \\ 
{\rm div}\,\u= {\rm div}\,\h=0\bigskip \\ 
\u=\h=0\,\,{\rm  \ on }\,\,\partial \Omega \times (0,T)
\bigskip \\ 
\u(0) =\u_{0};\quad \h(0) =%
\h_{0}\quad {\rm in }\,\,\Omega%
\end{array}
\label{MM2}
\end{equation}%
Taking the $\alpha{\rm curl}$ in the first of the above equations, it can be
written as:%
\begin{equation}
\alpha \frac{\partial z}{\partial t}+\nu z+\alpha \left( \u\cdot
\nabla \right) z=\nu {\rm curl}\u+\alpha {\rm curl}\left( \h%
\cdot \nabla \right) \h+\alpha{\rm curl} \f-\alpha {\rm curl}\nabla \left(
p^{\ast }+\h^{2}\right)  \label{M3}
\end{equation}%
where we have used the fact that ${\rm curl}\left( z\times \u
\right) =\u\cdot \nabla z,$ valid in two dimensions. Considering the
above equation, we can rewrite the system (\ref{MM2}) as follows:%
\begin{equation}
\begin{array}{ll}
\displaystyle\frac{\partial }{\partial t}(\u-\alpha \Delta \u)-\nu
\Delta \u+z\times \u & = \f+\left( \h\cdot \nabla
\right) \h-\nabla \left( p^{\ast }+\h^{2}\right) \bigskip \\ 
\displaystyle\frac{\partial \h}{\partial t}-\Delta \h+\left( \u%
\cdot \nabla \right) \h-\left( \h\cdot \nabla \right) 
\u & =-{\rm grad}\, \omega\bigskip \\ 
\alpha\displaystyle \frac{\partial z}{\partial t}+\nu z+\alpha \left( \u\cdot
\nabla \right) z-\nu {\rm curl}\u & =\alpha {\rm curl}\left( 
\h\cdot \nabla \right) \h+\alpha {\rm curl}\f\bigskip \\ 
& -\alpha {\rm curl}\nabla \left( p^{\ast }+\h^{2}\right) \bigskip
\\ 
{\rm div}\,\u={\rm div}\,\h=0. & 
\end{array}
\label{M4}
\end{equation}

\textbf{Semi-discretization in time}

Let $N>1$ be an integer, define the time step $k$ by%
\[
k=\frac{T}{N} 
\]%
and the subdivision points $t^{n}=nk.$ For each $n\geq 1,$ we approximate $%
\f\left( t^{n}\right) $ by the average defined almost everywhere in $\Omega $
by%
\[
\f^{n}(x) =\frac{1}{k}\int_{t^{n-1}}^{t^{n}}\f(x,s)
ds. 
\]%
We set 
\[
\u^{0}=\u_{0},\,\,\h^{0}=\h_{0}{\rm  \ and \ }%
z^{0}={\rm curl}\left( \u_{0}-\alpha \Delta \u%
_{0}\right) . 
\]%
Then, our semi-discrete problem reads: Find sequences $\left( \u
^{n}\right) _{n\geq 1},$ $\left( \h^{n}\right) _{n\geq 1},$ $\left(
z^{n}\right) _{n\geq 1}$ $,\left( p^{n}\right) _{n\geq 1}$ and $\left(
\omega^{n}\right) _{n\geq 1}$ such that $\u^{n},\h^{n}\in
V,z^{n}\in L^{2}(\Omega) ,$ and $p^{n},\omega^{n}\in L_{0}^{2}(
\Omega) ,$ solution of:%
\begin{equation}
\begin{array}{c}
\displaystyle\frac{1}{k}\left( \u^{n+1}-\u^{n}\right) -\alpha \frac{1}{k%
}\Delta \left( \u^{n+1}-\u^{n}\right) -\nu \Delta \u
^{n+1}+z^{n}\times \u^{n+1}\bigskip \\ 
=\f^{n+1}+\h^{n+1}\cdot \nabla \h^{n+1}-\nabla \left( p^{\ast
n+1}+\left( \h^{n+1}\right) ^{2}\right), \bigskip \\ 
\displaystyle\frac{1}{k}\left( \h^{n+1}-\h^{n}\right) -\Delta \h
^{n+1}+\u^{n+1}\cdot \nabla \h^{n+1}-\h^{n+1}\cdot
\nabla \u^{n+1}=-{\rm grad}\,\omega^{n+1},\bigskip \\ 
\displaystyle\frac{\alpha }{k}\left( z^{n+1}-z^{n}\right) +\nu z^{n+1}+\alpha \u%
^{n+1}\cdot \nabla z^{n+1}=\nu {\rm curl}\u^{n+1}+\alpha {\rm curl}%
\f^{n+1}\bigskip \\ 
+\alpha {\rm curl}\left( \h^{n+1}\cdot \nabla \h
^{n+1}\right) -\alpha {\rm curl}\nabla \left( p^{\ast n+1}+\left( \h
^{n+1}\right) ^{2}\right)%
\end{array}
\label{A1}
\end{equation}%
Now, we will make some estimates for $\u^{i},\h^{i},z^{i},$ $%
p^{i}$ and $\omega^{i}.$ Multiplying the first Eq. of (\ref{A1}) by $2k\u
^{n+1},$ the second Eq. of (\ref{A1}) by $2k\h^{n+1}$ and the third
Eq. of (\ref{A1}) by $2kz^{n+1}$ and observing that  ${\rm curl}\nabla F=0,$ for
any vector field $F,$  we obtain%

\begin{equation}
\begin{array}{l}
2\left( \u^{i+1}-\u^{i},\u^{i+1}\right) -2\alpha
\left( \Delta \left( \u^{i+1}-\u^{i}\right) ,\u%
^{i+1}\right) -2k\nu \left( \Delta \u^{i+1},\u^{i+1}\right)
\bigskip \\ 
=2k\left( \f^{i+1},\u^{i+1}\right) +2k\left( \h^{i+1}\cdot
\nabla \h^{i+1},\u^{i+1}\right), \bigskip \\ 
2\left( \h^{i+1}-\h^{i},\h^{i+1}\right) -2k\left(
\Delta \h^{i+1},\h^{i+1}\right) =2k\left( \h
^{i+1}\cdot \nabla \u^{i+1},\h^{i+1}\right), \bigskip \\ 
2\left( z^{i+1}-z^{i},z^{i+1}\right) +\displaystyle\frac{2\nu k}{{\alpha}}\left(
z^{i+1},z^{i+1}\right) =2k\left( {\rm curl}\f^{i+1},z^{i+1}\right) \bigskip
\\ 
+\displaystyle\frac{2\nu k}{{\alpha}}\left( {\rm curl}\u^{i+1},z^{i+1}\right)
+2k\left( {\rm curl}\left( \h^{i+1}\cdot \nabla \h
^{i+1}\right) ,z^{i+1}\right),
\end{array}
\label{M5}
\end{equation}
where we used that fact that $(\u^{i+1} \cdot\h^{i+1}, \h^{i+1})=0$.

\begin{proposition}
The sequence $(\u^{n}) _{n\geq 1}$ and $(\h%
^{n}) _{n\geq 1}$ satisfy the following uniform a priori estimates:%
\begin{equation}
\begin{array}{c}
\displaystyle\sum_{i=0}^{n-1}k\left\Vert \nabla \u^{i+1}\right\Vert
_{\alpha }^{2}\leq \frac{C^{2}\overline{C}}{\nu ^{2}}\left\Vert \f\right\Vert
_{L^{2}(\Omega ,\times ] 0,t^{n}[ ) }^{2}+\frac{1}{%
\nu }\left\Vert \u_{0}\right\Vert _{\alpha }^{2}+\frac{1}{\nu }%
\left\Vert \h_{0}\right\Vert ^{2},\bigskip \\ 
\displaystyle\sum_{i=0}^{n-1}k\left\Vert \nabla \h^{i+1}\right\Vert
^{2}\leq \frac{C^{2}\overline{C}}{2\nu }%
\left\Vert \f\right\Vert _{L^{2}(\Omega \times ] 0,t^{n}[
) }^{2}+\frac{1}{2}\left\Vert \u_{0}\right\Vert _{\alpha }^{2}+%
\frac{1}{2}\left\Vert \h_{0}\right\Vert ^{2}, \\
2\left\Vert \nabla \mathbf{h}^{i+1}\right\Vert
^{2}+\displaystyle\sum\limits_{j=1}^{i}\left( \left\Vert \nabla \h^{j+1}-\nabla 
\h^{j}\right\Vert ^{2}+\displaystyle\frac{k}{\mu \sigma }\left\Vert A\h
^{j+1}\right\Vert ^{2}\right) \leq \left\Vert \nabla \h
_{0}\right\Vert ^{2}.
\end{array}
\label{M6}
\end{equation}
\end{proposition}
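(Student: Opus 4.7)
The plan is to derive a combined energy identity directly from the three lines of (\ref{M5}). First, I would invoke the polarization identity $2(a-b,a)=\|a\|^2-\|b\|^2+\|a-b\|^2$ on each leading inner product, turning those terms into telescoping increments plus a nonnegative ``numerical dissipation'' $\|a-b\|^2$. The Laplacian terms are handled by integration by parts using the homogeneous Dirichlet boundary conditions; combined with the $-2\alpha(\Delta(\u^{i+1}-\u^{i}),\u^{i+1})$ piece this produces exactly $\|\u^{i+1}\|_\alpha^2-\|\u^{i}\|_\alpha^2+\|\u^{i+1}-\u^{i}\|_\alpha^2$ on the left of the first line, plus the viscous dissipation $2k\nu|\nabla\u^{i+1}|^2$.

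The crucial algebraic cancellations are: $(z^{i}\times\u^{i+1},\u^{i+1})=0$, because the cross product is orthogonal to $\u^{i+1}$, so the $z$-transport term drops from the first line; and
\[
(\h^{i+1}\cdot\n\h^{i+1},\u^{i+1})+(\h^{i+1}\cdot\n\u^{i+1},\h^{i+1})=0,
\]
which one verifies by integration by parts using ${\rm div}\,\u^{i+1}={\rm div}\,\h^{i+1}=0$ and the zero Dirichlet data on $\h^{i+1}$. Adding the first two equations of (\ref{M5}) therefore eliminates the entire MHD coupling and yields the clean identity
\[
\|\u^{i+1}\|_\alpha^2-\|\u^{i}\|_\alpha^2+\|\u^{i+1}-\u^{i}\|_\alpha^2+\|\h^{i+1}\|^2-\|\h^{i}\|^2+\|\h^{i+1}-\h^{i}\|^2+2k\nu|\nabla\u^{i+1}|^2+2k|\nabla\h^{i+1}|^2=2k(\f^{i+1},\u^{i+1}).
\]

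Next I would bound the forcing by Young's inequality with $H^{-1}$-duality and Poincaré (\ref{N1}): $2k(\f^{i+1},\u^{i+1})\le k\nu|\nabla\u^{i+1}|^2+(\mathcal{P}^{2}k/\nu)\|\f^{i+1}\|^{2}$, absorbing half of the viscous term on the left. Summing from $i=0$ to $n-1$ telescopes the $\alpha$- and $L^{2}$-norms, and by Jensen $\sum_{i=0}^{n-1}k\|\f^{i+1}\|^{2}\le\|\f\|_{L^{2}(\Omega\times]0,t^{n}[)}^{2}$. Dropping the nonnegative terms not needed then gives the first two inequalities of (\ref{M6}), the second being a restatement of the same bound with the $|\nabla\h^{i+1}|^2$ piece isolated.

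For the third estimate I would test the discrete $\h$-equation against $-2kA\h^{i+1}$ (equivalently $-2k\Delta\h^{i+1}$, since $\nabla\omega^{i+1}$ is $L^{2}$-orthogonal to the range of $A$); the polarization identity then produces $2(\|\nabla\h^{i+1}\|^{2}-\|\nabla\h^{i}\|^{2}+\|\nabla(\h^{i+1}-\h^{i})\|^{2})+(2k/(\mu\sigma))\|A\h^{i+1}\|^{2}$. The main obstacle is controlling the indefinite coupling $(\u^{i+1}\cdot\n\h^{i+1}-\h^{i+1}\cdot\n\u^{i+1},A\h^{i+1})$, which is not sign-definite. In two dimensions I would bound it via the Ladyzhenskaya inequality $\|v\|_{L^{4}}^{2}\le C\|v\|_{L^{2}}|\nabla v|$ together with the $H^{1}$-bound on $\u^{i+1}$ secured in the first step, splitting off a small fraction of $\|A\h^{i+1}\|^{2}$ to absorb these cross terms. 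A discrete Gronwall argument over $j=1,\dots,i$ then closes the telescoping sum and delivers the claimed bound.
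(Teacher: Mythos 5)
Your proposal follows essentially the same route as the paper: testing with $2\u^{i+1}$ and $2\h^{i+1}$, exploiting the cancellations $(z^{i}\times\u^{i+1},\u^{i+1})=0$ and the antisymmetry of the MHD coupling, applying the polarization identity and telescoping, absorbing the forcing by Young's inequality, and then testing the $\h$-equation against $2A\h^{i+1}$ with interpolation estimates and absorption of $\|A\h^{i+1}\|^{2}$ for the third bound. The only differences are cosmetic (Ladyzhenskaya in place of the paper's $L^{6}$--$L^{3}$ and $L^{\infty}$ interpolation, and a generic discrete Gronwall step where the paper cites Varnhorn's Lemma 3.14), so the argument matches the paper's proof.
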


\begin{proof}
Multiplying the first Eq. of (\ref{A1}) by $2\u^{i+1}$ and the
second Eq. of (\ref{A1}) by $2\h^{i+1},$ we obtain%
\[
\begin{array}{l}
\displaystyle\frac{2}{k}\left( \u^{i+1}-\u^{i},\u^{i+1}\right)
-\alpha \frac{2}{k}\Delta \left( \u^{i+1}-\u^{i},\u
^{i+1}\right) -2\nu \left( \Delta \u^{i+1},\u^{i+1}\right)
\bigskip \\ 
=2\left( \f^{i+1},\u^{i+1}\right) +2\left( \h^{i+1}\cdot
\nabla \h^{i+1},\u^{i+1}\right), \bigskip \\ 
\displaystyle\frac{2}{k}\left( \h^{i+1}-\h^{i},\h^{i+1}\right)
-2\left( \Delta \h^{i+1},\h^{i+1}\right) -2\left( \h
^{i+1}\cdot \nabla \u^{i+1},\h^{i+1}\right) =0%
\end{array}%
\]%
where we should note that%
\[
\begin{array}{ll}
\left( z^{i}\times \u^{i+1},\u^{i+1}\right) =0,& \qquad
\left( \nabla \left( p^{\ast i+1}+\left( \h^{i+1}\right) ^{2}\right)
,\u^{i+1}\right) =0,\bigskip \\ 
\left( {\rm grad}\, \omega^{i+1},\h^{i+1}\right) =0, & \qquad \left( \u^{i+1}\cdot \nabla \h^{i+1},\h^{i+1}\right) =0.%
\end{array}%
\]
Using the formula 
\begin{equation}
2( a-b,a) =\left\Vert a\right\Vert ^{2}-\left\Vert b\right\Vert
^{2}+\left\Vert a-b\right\Vert ^{2},  \label{M6/2}
\end{equation}
that is true in any Hilbert space, 
and adding the above equations, adding from $i=0$ to $n-1$ and making use the telescopic property, we have
\begin{equation}
\begin{array}{l}
\displaystyle\frac{1}{k}\left\Vert \u^{n}\right\Vert _{\alpha }^{2}+\frac{1}{k}%
\left\Vert \h^{n}\right\Vert ^{2}+%
\frac{1}{k}\sum_{i=0}^{n-1}\left[ \left\Vert \u^{i+1}-%
\u^{i}\right\Vert _{\alpha }^{2}\right] +\frac{1}{k}%
\sum_{i=0}^{n-1}\left\Vert \h^{i+1}-\h%
^{i}\right\Vert^{2}\bigskip \\ 
+2\nu \displaystyle\sum_{i=0}^{n-1}\left\Vert \nabla \u^{i+1}\right\Vert
^{2}+2\sum_{i=0}^{n-1}\left\Vert
\nabla \h^{i+1}\right\Vert^{2}\bigskip
\\ 
\leq 2C\displaystyle\sum_{i=0}^{n-1}\left\Vert \f^{i+1}\right\Vert \left\Vert \u^{i+1}\right\Vert +\frac{1}{k}\left\Vert \u_{0}\right\Vert _{\alpha }^{2}+%
\frac{1}{k}\left\Vert \h_{0}\right\Vert ^{2}.%
\end{array}
\label{M7}
\end{equation}%
Now taking into account that%
\[
\begin{array}{l}
2C\left\Vert \f^{i+1}\right\Vert \left\Vert 
\u^{i+1}\right\Vert \leq \bigskip \\ 
4C^{2}\displaystyle\frac{\delta }{2}\left\Vert \f^{i+1}\right\Vert ^{2}+\frac{1}{2\delta }\left\Vert \u^{i+1}\right\Vert
^{2}\leq 4C^{2}\frac{\delta }{2}\left\Vert
\f^{i+1}\right\Vert ^{2}+\frac{\overline{C}}{%
2\delta }\left\Vert \nabla \u^{i+1}\right\Vert^{2},%
\end{array}%
\]%
then from equation (\ref{M7}) we can write%
\[
\begin{array}{l}
2\nu \displaystyle\sum_{i=0}^{n-1}\left\Vert \nabla \u^{i+1}\right\Vert
^{2}+2\sum_{i=0}^{n-1}\left\Vert
\nabla \h^{i+1}\right\Vert ^{2}\leq
\bigskip \\ 
\displaystyle\sum_{i=0}^{n-1}\left( 4C^{2}\frac{\delta }{2}\left\Vert
\f^{i+1}\right\Vert ^{2}+\frac{\overline{C}}{%
2\delta }\left\Vert \nabla \u^{i+1}\right\Vert^{2}\right) +\frac{1}{k}\left\Vert \u_{0}\right\Vert
_{\alpha }^{2}+\frac{1}{k}\left\Vert \h_{0}\right\Vert
^{2}%
\end{array}%
\]%
then, putting $\delta =\overline{C}/2\nu $, we obtain%
\[
\begin{array}{l}
\nu \displaystyle\sum_{i=0}^{n-1}\left\Vert \nabla \u^{i+1}\right\Vert
^{2}+2\sum_{i=0}^{n-1}\left\Vert
\nabla \h^{i+1}\right\Vert ^{2}\leq
%\bigskip \\ 
\displaystyle\frac{C^{2}\overline{C}}{\nu k}\left\Vert \f\right\Vert
_{L^{2}(\Omega ,\times ] 0,t^{n}[ ) }^{2}+\frac{1}{k}%
\left\Vert \u_{0}\right\Vert _{\alpha }^{2}+\frac{1}{k}\left\Vert 
\h_{0}\right\Vert ^{2}.
\end{array}%
\]
%%%%%%%%%%%%%%%%%%%%%%%%%%%%%%%%%%%%%%%%%%%%%%%%%%%%%%%%%%%%%%%%%%%%%%%%%%%%%%%%%%%%%%%%%%%

On the other hand, to obtain estimates of the $\left\Vert \nabla \h
^{n+1}\right\Vert ^{2},$ we multiply the
second equation in (\ref{A1}) by $2A\h^{i+1},$ then we obtain (after applying
the projection operator $P$)%
\[
\frac{2}{k}\left( \nabla \h^{i+1}-\nabla \h^{i},\nabla 
\h^{i+1}\right) +\frac{2}{\mu \sigma }\left\Vert A\h
^{i+1}\right\Vert ^{2}=-2\left( \u^{i+1}\cdot \nabla \h
^{i+1},A\h^{i+1}\right) +2\left( \h^{i+1}\cdot \nabla 
\u^{i+1},A\h^{i+1}\right) , 
\]%
then bounded each of terms, we have%
\[
\frac{2}{k}\left( \nabla \h^{i+1}-\nabla \h^{i},\nabla 
\h^{i+1}\right) =\frac{1}{k}\left\Vert \nabla \h
^{i+1}\right\Vert ^{2}-\frac{1}{k}\left\Vert \nabla \h
^{i}\right\Vert ^{2}+\frac{1}{k}\left\Vert \nabla \h^{i+1}-\nabla 
\h^{i}\right\Vert ^{2},
\]%
\begin{eqnarray*}
\left\vert 2\left( \u^{i+1}\cdot \nabla \h^{i+1},A\h
^{i+1}\right) \right\vert &\leq &2\left\Vert \u^{i+1}\right\Vert
_{L^{6}}^{2}\left\Vert \nabla \mathbf{h}^{i+1}\right\Vert
_{L^{3}}^{2}\left\Vert A\h^{i+1}\right\Vert ^{2}\bigskip \\
&\leq &2\left\Vert \nabla \u^{i+1}\right\Vert ^{2}\left\Vert \nabla 
\h^{i+1}\right\Vert ^{1/2}\left\Vert A\h^{i+1}\right\Vert
^{3/2}\bigskip \\
&\leq &2C_{\varepsilon _{1}}\left\Vert \nabla \u^{i+1}\right\Vert
^{4}\left\Vert \nabla \h^{i+1}\right\Vert ^{2}+2\varepsilon
_{1}\left\Vert A\h^{i+1}\right\Vert ^{2},
\end{eqnarray*}%
\begin{eqnarray*}
\left\vert 2\left( \h^{i+1}\cdot \nabla \u^{i+1},A\h
^{i+1}\right) \right\vert &\leq &2\left\Vert \h^{i+1}\right\Vert
_{L^{\infty }}\left\Vert \nabla \u^{i+1}\right\Vert \left\Vert A%
\h^{i+1}\right\Vert \bigskip \\
&\leq &2C\left\Vert \nabla \h^{i+1}\right\Vert ^{1/2}\left\Vert A%
\h^{i+1}\right\Vert ^{3/2}\left\Vert \nabla \u
^{i+1}\right\Vert \bigskip \\
&\leq &2CC_{\varepsilon _{2}}\left\Vert \nabla \u^{i+1}\right\Vert
^{4}\left\Vert \nabla \h^{i+1}\right\Vert ^{2}+2C\varepsilon
_{2}\left\Vert A\h^{i+1}\right\Vert ^{2},
\end{eqnarray*}%
where we use the estimate of interpolation $\left\Vert \h
^{i+1}\right\Vert _{L^{\infty }}\leq C\left\Vert \nabla \h
^{i+1}\right\Vert ^{1/2}\left\Vert A\h^{i+1}\right\Vert ^{1/2}.$
From above estimates and taking into account that $\left\Vert \nabla 
\u^{i+1}\right\Vert $ is bounded, we have%
%\[
%\begin{array}{l}
%\frac{1}{k}\left\Vert \nabla \h^{i+1}\right\Vert ^{2}-\frac{1}{k}%
%\left\Vert \nabla \h^{i}\right\Vert ^{2}+\frac{1}{k}\left\Vert
%\nabla \h^{i+1}-\nabla \h^{i}\right\Vert ^{2}+\frac{2}{\mu
%\sigma }\left\Vert A\h^{i+1}\right\Vert ^{2}\bigskip \\ 
%\leq \left( 2C_{\varepsilon _{1}}+2CC_{\varepsilon _{2}}\right) \overline{C}%
%\left\Vert \nabla \h^{i+1}\right\Vert ^{2}+\left( 2\varepsilon
%_{1}+2C\varepsilon _{2}\right) \left\Vert A\h^{i+1}\right\Vert ^{2}%
%\end{array}%
%\]%
%or%
\[
\begin{array}{l}
\left\Vert \nabla \h^{i+1}\right\Vert ^{2}+\left\Vert \nabla \h^{i+1}-\nabla \h^{i}\right\Vert ^{2}+\frac{2k}{\mu \sigma }%
\left\Vert A\h^{i+1}\right\Vert ^{2}\bigskip \\ 
\leq k\left( 2C_{\varepsilon _{1}}+2CC_{\varepsilon _{2}}\right) \overline{C}%
\left\Vert \nabla \h^{i+1}\right\Vert ^{2}+k\left( 2\varepsilon
_{1}+2C\varepsilon _{2}\right) \left\Vert A\h^{i+1}\right\Vert
^{2}+\left\Vert \nabla \h^{i}\right\Vert ^{2}%
\end{array}%
\]%
then, there is $k$ and $\varepsilon $ such that (for a sufficiently large $N$%
)$\ 1-k\left( 2C_{\varepsilon _{1}}+2CC_{\varepsilon _{2}}\right) \overline{C%
}=1/2$ and $2k/\mu \sigma -k\left( 2\varepsilon _{1}+2C\varepsilon
_{2}\right) =k/\mu \sigma ,$ thus, from the above inequality we can write%
\[
2\left\Vert \nabla \h^{i+1}\right\Vert ^{2}+\left\Vert \nabla 
\h^{i+1}-\nabla \h^{i}\right\Vert ^{2}+\frac{k}{\mu \sigma }%
\left\Vert A\h^{i+1}\right\Vert ^{2}\leq \left\Vert \nabla \h
^{i}\right\Vert ^{2}, 
\]%
from which we get (using the lemma 3.14 pg. 131 in \cite{Varnhorn} with $%
\eta =\gamma _{i}=\xi =0$) 
\[
2\left\Vert \nabla \h^{i+1}\right\Vert
^{2}+\sum\limits_{j=1}^{i}\left( \left\Vert \nabla \h^{j+1}-\nabla 
\h^{j}\right\Vert ^{2}+\frac{k}{\mu \sigma }\left\Vert A\h
^{j+1}\right\Vert ^{2}\right) \leq \left\Vert \nabla \h
_{0}\right\Vert ^{2}.
\]

\end{proof}

\begin{proposition}
The sequence $(\u^{n}) _{n\geq 1}$ and $(\h
^{n}) _{n\geq 1}$ satisfy the following uniform a priori estimates, for $1\leq n\leq N$
\begin{equation}
\begin{array}{l}
\left\Vert \u^{n}\right\Vert _{\alpha
}^{2}+\displaystyle\sum_{i=0}^{n-1}\left\Vert \u^{i+1}-\u
^{i}\right\Vert _{\alpha }^{2}\leq  
\displaystyle\frac{C^{2}}{2\nu }\left\Vert \f\right\Vert _{L^{2}(\Omega
\times (0,t^{n})) }^{2}+\left\Vert \u
_{0}\right\Vert _{\alpha }^{2}+\left\Vert \h_{0}\right\Vert
^{2},
\end{array}
\label{A2}
\end{equation}%
\begin{equation}
\begin{array}{l}
\left\Vert \h^{n}\right\Vert ^{2}+\displaystyle\sum_{i=0}^{n-1}\left\Vert \h^{i+1}-\h
^{i}\right\Vert ^{2}\leq  
\displaystyle\frac{C^{2}}{2\nu }\left\Vert \f\right\Vert _{L^{2}(\Omega
\times (0,t^{n})) }^{2}+\left\Vert \u%
_{0}\right\Vert _{\alpha }^{2}+\left\Vert \h_{0}\right\Vert
^{2},%
\end{array}
\label{A3}
\end{equation}
\end{proposition}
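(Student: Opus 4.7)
The plan is to reuse the energy argument of the preceding proposition, but to choose the Young weight so that the entire viscous dissipation on the right-hand side is absorbed by the left-hand side, rather than leaving a fixed fraction of it. The overall structure is: test, cancel, telescope, absorb, drop.

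First, I test the first equation of (\ref{A1}) with $2\u^{i+1}$ and the second with $2\h^{i+1}$. The pressure-type terms vanish since $\u^{i+1},\h^{i+1}\in V$; the Lamb term $(z^{i}\times\u^{i+1},\u^{i+1})$ vanishes pointwise; and $(\u^{i+1}\cdot\nabla\h^{i+1},\h^{i+1})=0$ by incompressibility of $\u^{i+1}$. Crucially, the two Lorentz coupling terms cancel upon addition,
\[
(\h^{i+1}\cdot\nabla\h^{i+1},\u^{i+1})+(\h^{i+1}\cdot\nabla\u^{i+1},\h^{i+1})=0,
\]
via integration by parts using ${\rm div}\,\h^{i+1}=0$ and the homogeneous Dirichlet trace. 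This cancellation is the single essential observation and is not a genuine difficulty.

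Next, applying the identity $2(a-b,a)=\|a\|^{2}-\|b\|^{2}+\|a-b\|^{2}$ in the $\alpha$-norm for $\u$ and in $L^{2}$ for $\h$, integrating by parts in the Laplacian terms, and adding the two balances yields, for each $i$,
\[
\bigl(\|\u^{i+1}\|_{\alpha}^{2}-\|\u^{i}\|_{\alpha}^{2}\bigr)+\|\u^{i+1}-\u^{i}\|_{\alpha}^{2}+\bigl(\|\h^{i+1}\|^{2}-\|\h^{i}\|^{2}\bigr)+\|\h^{i+1}-\h^{i}\|^{2}+2k\nu\|\nabla\u^{i+1}\|^{2}+2k\|\nabla\h^{i+1}\|^{2}=2k(\f^{i+1},\u^{i+1}).
\]
Summing from $i=0$ to $n-1$ telescopes the norms of $\u^{j}$ and $\h^{j}$ down to the initial data $\|\u_{0}\|_{\alpha}^{2}+\|\h_{0}\|^{2}$ on the right.

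Finally, Poincar\'e's inequality followed by Young's inequality with weight tuned to $2\nu$ gives
\[
2k|(\f^{i+1},\u^{i+1})|\leq 2kC\|\f^{i+1}\|\,\|\nabla\u^{i+1}\|\leq \frac{kC^{2}}{2\nu}\|\f^{i+1}\|^{2}+2k\nu\|\nabla\u^{i+1}\|^{2},
\]
and the viscous term on the right is exactly absorbed by its counterpart on the left. Summing converts $\sum k\|\f^{i+1}\|^{2}$ into $\|\f\|_{L^{2}(\Omega\times(0,t^{n}))}^{2}$. Dropping the appropriate non-negative remainder terms (either $\|\h^{n}\|^{2}$, the $\h$-difference sum, and $2k\sum\|\nabla\h^{i+1}\|^{2}$ to obtain (\ref{A2}), or $\|\u^{n}\|_{\alpha}^{2}$, the $\u$-difference sum, and $2k\nu\sum\|\nabla\u^{i+1}\|^{2}$ to obtain (\ref{A3})) yields the two stated bounds. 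The main point distinguishing this estimate from the previous proposition is precisely the choice of Young's weight that produces the $C^{2}/(2\nu)$ prefactor; I anticipate no further difficulty.
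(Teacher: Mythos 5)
Your proposal is correct and follows essentially the same route as the paper: test with $2\u^{i+1}$ and $2\h^{i+1}$, exploit the cancellation of the Lorentz coupling terms and the vanishing of the pressure, Lamb, and $(\u^{i+1}\cdot\nabla\h^{i+1},\h^{i+1})$ terms, apply the identity $2(a-b,a)=\|a\|^2-\|b\|^2+\|a-b\|^2$, telescope, absorb the full viscous dissipation via Young's inequality to obtain the $C^{2}/(2\nu)$ prefactor, and drop the remaining non-negative terms. No substantive difference from the paper's argument.
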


\begin{proof}
Estimates (\ref{A2}) and (\ref{A3}) are derived by adding the first and
second equations of (\ref{M5}) and using the formula (\ref{M6/2}),%
\[
\begin{array}{l}
\left\Vert \u^{i+1}\right\Vert ^{2}-\left\Vert \u^{i}\right\Vert ^{2}+\left\Vert \u^{i+1}-\u^{i}\right\Vert ^{2}+\left\Vert \h^{i+1}\right\Vert ^{2}\bigskip \\ 
-\left\Vert \h^{i}\right\Vert ^{2}+\left\Vert \h^{i+1}-\h^{i}\right\Vert ^{2}-2\alpha \left( \Delta \left( \u^{i+1}-\u
^{i}\right) ,\u^{i+1}-\u^{i}\right) \bigskip \\ 
-2\alpha \left( \Delta \left( \u^{i+1}-\u^{i}\right) ,%
\u^{i}\right) -2k\nu \left\Vert \nabla \u^{i+1}\right\Vert
^{2}+2k\left\Vert \nabla \h
^{i+1}\right\Vert^{2}=2k\left( \f^{i+1},%
\u^{i+1}\right)%
\end{array}%
\]%
then adding from $i=0$ to $i=n-1$ and making use of the telescopic property, and again using the formulae (\ref{M6/2}), we have
\begin{equation}
\begin{array}{l}
\left\Vert \u^{n}\right\Vert _{\alpha }^{2}-\left\Vert \u%
_{0}\right\Vert _{\alpha }^{2}+\displaystyle\sum_{i=0}^{n-1}\left\Vert \u
^{i+1}-\u^{i}\right\Vert _{\alpha }^{2}+\left\Vert \h
^{n}\right\Vert ^{2}\bigskip \\ 
-\left\Vert \h_{0}\right\Vert^{2}+\displaystyle\sum_{i=0}^{n-1}\left\Vert \h^{i+1}-\h
^{i}\right\Vert ^{2}+2\alpha
\sum_{i=0}^{n-1}\left\Vert \nabla \left( \u^{i+1}-\u
^{i}\right) \right\Vert ^{2}\bigskip \\ 
+2k\displaystyle\sum_{i=0}^{n-1}\left\Vert \nabla \h^{i+1}\right\Vert
^{2}\leq \frac{C^{2}}{2\nu }%
\sum_{i=0}^{n-1}k\left\Vert \f^{i+1}\right\Vert ^{2},%
\end{array}
\label{A51}
\end{equation}%
now, drooping $2\alpha \displaystyle\sum_{i=0}^{n-1}\left\Vert \nabla \left( 
\u^{i+1}-\u^{i}\right) \right\Vert ^{2}$ and $2k\displaystyle\sum_{i=0}^{n-1}\left\Vert \nabla \h%
^{i+1}\right\Vert ^{2}$ we have%
\[
\begin{array}{l}
\left\Vert \u^{n}\right\Vert _{\alpha
}^{2}+\displaystyle\sum_{i=0}^{n-1}\left\Vert \u^{i+1}-\u
^{i}\right\Vert _{\alpha }^{2}+\left\Vert \h^{n}\right\Vert
^{2}+\sum_{i=0}^{n-1}\left\Vert \h
^{i+1}-\h^{i}\right\Vert ^{2}\bigskip
\\ 
\leq \displaystyle\frac{C^{2}}{2\nu }\left\Vert \f\right\Vert _{L^{2}(
\Omega ,\times ] 0,t^{n}[) }^{2}+\left\Vert \u%
_{0}\right\Vert _{\alpha }^{2}+\left\Vert \h_{0}\right\Vert
^{2},%
\end{array}%
\]%
where $\displaystyle\sum_{i=0}^{n-1}k\left\Vert \f^{i+1}\right\Vert
^{2}=\left\Vert \f\right\Vert
_{L^{2}(\Omega ,\times ] 0,t^{n}[) }^{2}.$ From which we get the result.

\end{proof}

\begin{remark} From   (\ref{A51}) we have%
\begin{equation}
\begin{array}{l}
2\alpha \displaystyle\sum_{i=0}^{n-1}\left\Vert \nabla \left( \u^{i+1}-%
\u^{i}\right) \right\Vert ^{2}+2k\sum_{i=0}^{n-1}\left\Vert \nabla \h^{n}\right\Vert
^{2}\leq  
\displaystyle\frac{C^{2}}{2\nu }\left\vert \f\right\vert _{L^{2}(\Omega ,\times %
] 0,t^{n}[) }^{2}+\left\Vert \u_{0}\right\Vert
_{\alpha }^{2}+\left\Vert \h_{0}\right\Vert^{2}.%
\end{array}
\label{A55}
\end{equation}
\end{remark}

\textbf{Estimate for }$z^{n}$

Before obtaining an estimate for $z^{n}$, we will show the following
corollary

\begin{corollary}
Give the sequence $\left( \h^{n}\right) _{n\geq 1}$ we have the
following uniform a priori estimates%
\[
\left\Vert A\h^{n}\right\Vert \leq C%
{\rm \qquad and\qquad }\left\Vert \nabla \left( \h^{n}\cdot \nabla 
\h^{n}\right) \right\Vert \leq C %
\qquad \forall n\in \mathbb{N}.
\]
\end{corollary}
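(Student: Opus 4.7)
The plan is to read the second equation of (\ref{A1}) as a stationary Stokes system for $(\h^{n+1},\omega^{n+1})$ with source
$$\g^{n+1}\;=\;-\frac{1}{k}(\h^{n+1}-\h^{n})\;-\;\u^{n+1}\!\cdot\!\nabla\h^{n+1}\;+\;\h^{n+1}\!\cdot\!\nabla\u^{n+1},$$
and to invoke the Stokes regularity result of Theorem~2 (second part) to obtain $\|A\h^{n+1}\|\le C_{S}\|\g^{n+1}\|$. The task then reduces to bounding every contribution to $\g^{n+1}$ in $L^{2}(\Omega)$ uniformly in $n$.

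For the two convective pieces I would use the 2D interpolation $\|\v\|_{L^{\infty}}\le C\|\nabla\v\|^{1/2}\|A\v\|^{1/2}$ already invoked in the proof of Proposition~3.1. Writing $\|\h^{n+1}\!\cdot\!\nabla\u^{n+1}\|\le\|\h^{n+1}\|_{L^{\infty}}\|\nabla\u^{n+1}\|$ and analogously for $\u^{n+1}\!\cdot\!\nabla\h^{n+1}$, then combining with the uniform control of $\|\nabla\u^{n+1}\|$ and $\|\nabla\h^{n+1}\|$ supplied by (\ref{M6}), a Young inequality absorbs the fractional power $\|A\h^{n+1}\|^{1/2}$ produced by the interpolation into the left-hand side. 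This leaves an inequality of the shape $\|A\h^{n+1}\|\le C+C\|(\h^{n+1}-\h^{n})/k\|$.

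The main obstacle is the discrete time-increment term. Proposition~3.1 yields only the telescopic bound $\sum_{i}\|\h^{i+1}-\h^{i}\|^{2}\le C$, which on a single step gives no better than $O(k^{-1/2})$, and hence no uniform control. To close the estimate I would carry out a second energy estimate on the time-difference of the $\h$-equation: subtract the $\h$-equations at consecutive steps, test the result against $(\h^{n+1}-\h^{n})/k$, treat the nonlinear couplings by the already available bound on $\|\nabla\u^{n+1}\|$ together with the 2D interpolation, and conclude by a discrete Gronwall argument, under the compatibility hypothesis that $A\h_{0}\in L^{2}(\Omega)$. This is the delicate step; with it in hand, the Stokes bound above gives $\|A\h^{n+1}\|\le C$ uniformly in $n$.

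Once $\|A\h^{n}\|\le C$ is secured, the second assertion follows from the Leibniz rule
$$\nabla(\h^{n}\!\cdot\!\nabla\h^{n})\;=\;\nabla\h^{n}\otimes\nabla\h^{n}\;+\;\h^{n}\otimes\nabla^{2}\h^{n},$$
so that $\|\nabla(\h^{n}\!\cdot\!\nabla\h^{n})\|\le\|\nabla\h^{n}\|_{L^{4}}^{2}+\|\h^{n}\|_{L^{\infty}}\|A\h^{n}\|$. The factor $\|\nabla\h^{n}\|_{L^{4}}^{2}\le C\|\nabla\h^{n}\|\|A\h^{n}\|$ by the 2D Ladyzhenskaya inequality, and $\|\h^{n}\|_{L^{\infty}}\le C\|\nabla\h^{n}\|^{1/2}\|A\h^{n}\|^{1/2}$; both quantities are then controlled by (\ref{M6}) and the preceding step, which closes the corollary.
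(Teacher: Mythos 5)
Your overall strategy coincides with the paper's: both read the second equation of (\ref{A1}) as an elliptic problem for $\h^{n+1}$ (you via the Stokes regularity of Theorem 2, the paper via the projector $P$ and the Stokes operator $A$, which amounts to the same thing), bound the two convective terms by Sobolev embedding together with the interpolation $\left\Vert \cdot\right\Vert_{L^{\infty}}\leq C\left\Vert \nabla\cdot\right\Vert^{1/2}\left\Vert A\cdot\right\Vert^{1/2}$, absorb the resulting half-power of $\left\Vert A\h^{n+1}\right\Vert$ by Young's inequality, and derive the second assertion from the Leibniz rule with essentially the same interpolation inequalities (your $L^{4}$--$L^{4}$ splitting versus the paper's $L^{3}$--$L^{6}$ is immaterial). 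The one genuine divergence is the treatment of $\frac{1}{k}\left\Vert \h^{n+1}-\h^{n}\right\Vert$: the paper simply absorbs this term into the generic constant $C$ in the final inequality $(1-\varepsilon_{1}-\varepsilon_{2})\left\Vert A\h^{n+1}\right\Vert\leq C_{\varepsilon_{1}}+C_{\varepsilon_{2}}+C$ without any justification, whereas you correctly observe that the only estimate available at that stage, the telescoping bound (\ref{A3}), gives no uniform per-step control of the difference quotient; moreover, the later proposition that does estimate $(\h^{n+1}-\h^{n})/k$ explicitly invokes this corollary in its proof, so it cannot be used here. Your diagnosis of the delicate step is therefore accurate, and your proposed repair (an energy estimate for the time-differenced equation tested against the difference quotient, a discrete Gronwall argument, and a compatibility hypothesis such as $A\h_{0}\in L^{2}(\Omega)$) is the standard way to close the argument; note, however, that differencing the $\h$-equation produces the term $(\u^{i+1}-\u^{i})\cdot\nabla\h^{i+1}$, so this estimate must be run jointly with a corresponding one for $(\u^{i+1}-\u^{i})/k$, and you have only sketched that step. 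In short: same method as the paper, but you have correctly identified, and only partially filled, a gap that the paper's own proof leaves open.
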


\begin{proof}
Applying the projector $P$ to the second equation of (\ref{A1}) we
obtain%
\[
A\h^{n+1}=-P\left( \u^{n+1}\cdot \nabla \h
^{n+1}\right) +P\left( \h^{n+1}\cdot \nabla \u^{n+1}\right) -%
\frac{1}{k}P\left( \h^{n+1}-\h^{n}\right) 
\]%
then%
\begin{equation}
\begin{array}{l}
\left\Vert A\h^{n+1}\right\Vert\leq 
\left\Vert \u^{n+1}\cdot \nabla \h^{n+1}\right\Vert
+\left\Vert \h^{n+1}\cdot \nabla 
\u^{n+1}\right\Vert +\displaystyle\frac{1}{k}%
\left\Vert \h^{n+1}-\h^{n}\right\Vert %
\end{array}
\label{A6}
\end{equation}%
thus, each term can be estimated as follows:

\begin{description}
\item[a)] 
\begin{eqnarray*}
\left\Vert \u^{n+1}\cdot \nabla \h^{n+1}\right\Vert
 &\leq &\left\Vert \u^{n+1}\right\Vert
_{L^{6}(\Omega) }\left\Vert \nabla \h^{n+1}\right\Vert
_{L^{3}(\Omega) }\bigskip \\
&\leq &C\left\Vert \nabla \u^{n+1}\right\Vert \left\Vert \nabla \h^{n+1}\right\Vert ^{1/2}\left\Vert A\h^{n+1}\right\Vert ^{1/2}\bigskip \\
&\leq &C\left\Vert A\h^{n+1}\right\Vert ^{1/2}\bigskip \\
&\leq &C_{\varepsilon _{1}}+\varepsilon _{1}\left\Vert A\h
^{n+1}\right\Vert \, {\rm for }\, \varepsilon
_{1}>0\,{\rm small}
\end{eqnarray*}%
here was used $H^{1}\hookrightarrow L^{6}$ and a result of interpolation.

\item[b)] 
\begin{eqnarray*}
\left\Vert \h^{n+1}\cdot \nabla \u^{n+1}\right\Vert
 &\leq &\left\Vert \h^{n+1}\right\Vert
_{L^{\infty }(\Omega) }\left\Vert \nabla \u%
^{n+1}\right\Vert \bigskip \\
&\leq &C\left\Vert \h^{n+1}\right\Vert _{L^{\infty }(\Omega
) }\bigskip \\
&\leq &C\left\Vert \h^{n+1}\right\Vert ^{1/2}\left\Vert A\h^{n+1}\right\Vert ^{1/2}\bigskip \\
&\leq &C\left\Vert A\h^{n+1}\right\Vert ^{1/2}\bigskip \\
&\leq &C_{\varepsilon _{2}}+\varepsilon _{2}\left\Vert A\h%
^{n+1}\right\Vert .
\end{eqnarray*}%
\end{description}

Finally, substituting in (\ref{A6}) we obtain%
\[
\left( 1-\varepsilon _{1}-\varepsilon _{2}\right) \left\Vert A\h
^{n+1}\right\Vert \leq C_{\varepsilon
_{1}}+C_{\varepsilon _{2}}+C 
\]%
and considering $\left( 1-\varepsilon _{1}-\varepsilon _{2}\right) >0$ we
obtain%
\begin{equation}
\left\Vert A\h^{n+1}\right\Vert \leq C,
\label{A7}
\end{equation}%
where the constant $C$ is generic.

Now, taking into account the equation (\ref{A7}) and usual estimates, we have%
\[
\begin{array}{l}
\left\Vert \nabla \left( \h^{n+1}\cdot \nabla \h
^{n+1}\right) \right\Vert \bigskip \\ 
\leq C_{1}\left\Vert \nabla \h^{n+1}\cdot \nabla \h
^{n+1}\right\Vert +C_{2}\left\Vert \h
^{n+1}\cdot \nabla ^{2}\h^{n+1}\right\Vert \bigskip \\ 
\leq C_{3}\left\Vert \nabla \h^{n+1}\right\Vert _{L^{3}(\Omega) }\left\Vert \nabla \h^{n+1}\right\Vert _{L^{6}(\Omega) }+C_{4}\left\Vert \h^{n+1}\right\Vert _{L^{\infty }(
\Omega) }\left\Vert \nabla ^{2}\h^{n+1}\right\Vert
\bigskip \\ 
\leq C_{5}\left\Vert A\h^{n+1}\right\Vert \left\Vert A\h^{n+1}\right\Vert +C_{6}^{1/2}\left\Vert A\h^{n+1}\right\Vert ^{1/2}\left\Vert A\h^{n+1}\right\Vert \bigskip \\ 
\leq C,%
\end{array}%
\]%
another consequence of (\ref{A7}) is $\left\Vert \h^{n}\cdot \nabla 
\h^{n+1}\right\Vert \leq C.$
\end{proof}

\begin{proposition}
The sequence $(z^{n}) _{n\geq 1}$ satisfy the following uniform
a priori estimates%
\begin{equation}
\begin{array}{l}
\left\Vert z^{n}\right\Vert^{2}+\displaystyle\sum_{i=0}^{n-1}\left\Vert z^{i+1}-z^{i}\right\Vert ^{2}\leq \bigskip \\ 
\leq \left( \displaystyle\frac{C^{2}\overline{C}}{\alpha }\left\vert \f%
\right\vert _{L^{2}(\Omega ,\times ] 0,t^{n}[ ) }^{2}+%
\frac{\nu }{2\alpha }\left\Vert \u_{0}\right\Vert _{\alpha }^{2}+%
\frac{1}{\alpha }\left\Vert \h_{0}\right\Vert \right) \bigskip \\ 
+\displaystyle\frac{2\alpha k}{\nu }\left\Vert {\rm  curl}\, \f\right\Vert _{L^{2}\left(
\Omega \times \left( 0,t^{n}\right) \right) }^{2}+\frac{2\alpha }{\nu }%
CT+\left\Vert z_{0}\right\Vert ^{2}.%
\end{array}
\label{E0}
\end{equation}
\end{proposition}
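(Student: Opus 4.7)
The plan is to start from the third line of the block of equalities (\ref{M5}), which already encodes the test with $2kz^{i+1}/\alpha$ applied to the $z$-equation. First I would verify that the transport term $2k(\u^{i+1}\cdot\nabla z^{i+1},z^{i+1})$ vanishes: since $\u^{i+1}\in V$, Green's formula (\ref{N5}) from Theorem~3 applied to $z=\theta=z^{i+1}$ gives this identity immediately. The curl of the pressure-like gradient also drops out, which is why it is absent from (\ref{M5}). Then I would apply the Hilbert-space identity (\ref{M6/2}) to the first term on the left to obtain
\[
\|z^{i+1}\|^2-\|z^i\|^2+\|z^{i+1}-z^i\|^2+\tfrac{2\nu k}{\alpha}\|z^{i+1}\|^2
=\tfrac{2\nu k}{\alpha}(\mathrm{curl}\,\u^{i+1},z^{i+1})+2k(\mathrm{curl}\,\f^{i+1},z^{i+1})+2k(\mathrm{curl}(\h^{i+1}\!\cdot\!\nabla\h^{i+1}),z^{i+1}).
\]

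Next I would estimate each of the three terms on the right by Cauchy--Schwarz followed by Young's inequality $ab\le\varepsilon a^2+\tfrac{1}{4\varepsilon}b^2$, choosing the parameters so that a fraction of $\tfrac{2\nu k}{\alpha}\|z^{i+1}\|^2$ can be absorbed into the left-hand side. Concretely, the first term gives a contribution of order $\tfrac{\nu k}{\alpha}\|\mathrm{curl}\,\u^{i+1}\|^2\lesssim \tfrac{\nu k}{\alpha}\|\nabla\u^{i+1}\|^2$; the second gives a contribution of order $\tfrac{\alpha k}{\nu}\|\mathrm{curl}\,\f^{i+1}\|^2$; and the third is controlled by $\tfrac{\alpha k}{\nu}\|\nabla(\h^{i+1}\!\cdot\!\nabla\h^{i+1})\|^2$, which by the preceding Corollary is bounded by $\tfrac{\alpha k}{\nu}C$.

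Summing the resulting inequality from $i=0$ to $n-1$ and using the telescoping of $\|z^{i+1}\|^2-\|z^i\|^2$ yields
\[
\|z^n\|^2+\sum_{i=0}^{n-1}\|z^{i+1}-z^i\|^2\le \|z^0\|^2+\tfrac{\nu}{\alpha}\sum_{i=0}^{n-1}k\|\nabla\u^{i+1}\|^2+\tfrac{2\alpha k}{\nu}\|\mathrm{curl}\,\f\|_{L^2(\Omega\times(0,t^n))}^2+\tfrac{2\alpha}{\nu}CT.
\]
At this point I would insert the $\u$-estimate from the first proposition to convert $\sum k\|\nabla\u^{i+1}\|^2$ into the stated combination of $\|\f\|_{L^2}^2$, $\|\u_0\|_\alpha^2$ and $\|\h_0\|^2$, arriving at (\ref{E0}) with the constants as written.

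The only real obstacle is the curl of the Lorentz-type nonlinearity $\h\cdot\nabla\h$: without the Corollary one cannot pointwise-in-$n$ dominate $\|\mathrm{curl}(\h^{i+1}\!\cdot\!\nabla\h^{i+1})\|$ by a constant, and the estimate would instead become an implicit Gronwall-type inequality for $\|z^n\|$. The Corollary, which in turn rests on the uniform $L^\infty$-in-$n$ bound $\|A\h^n\|\le C$ coming from the third estimate of the previous proposition and the interpolation $\|\h\|_{L^\infty}\le C\|\nabla\h\|^{1/2}\|A\h\|^{1/2}$, is what makes the right-hand side summable and gives the clean constant $\tfrac{2\alpha}{\nu}CT$ in the final bound.
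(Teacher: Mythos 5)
Your proposal follows essentially the same route as the paper: test the discrete $z$-equation with $\tfrac{2k}{\alpha}z^{i+1}$ (the transport term vanishing by the Green's formula of Theorem~3), apply the identity (\ref{M6/2}), absorb via Young's inequality with the same choice of weights, control the nonlinear curl term through the Corollary's uniform bound $\Vert\nabla(\h^{i+1}\cdot\nabla\h^{i+1})\Vert\leq C$, telescope, and finish with the velocity estimate of the first proposition. The argument and the resulting constants coincide with the paper's proof.
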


\begin{proof}
Multiplying the third equation of (\ref{A1}) by $\frac{2k}{\alpha }z^{i+1},$ we get
\[
\begin{array}{l}
2\left( z^{i+1}-z^{i},z^{i+1}\right) +\displaystyle\frac{2k\nu }{\alpha }\left(
z^{i+1},z^{i+1}\right) =\frac{2k\nu }{\alpha }\left( {\rm curl}\, \u%
^{i+1},z^{i+1}\right) \bigskip \\ 
+2k\left( {\rm curl}\,\f^{i+1},z^{i+1}\right) +2k\left( {\rm curl}\left( 
\h^{i+1}\cdot \nabla \h^{i+1}\right) ,z^{i+1}\right)%
\end{array}%
\]%
then, using the formula (\ref{M6/2}), we obtain%
\[
\begin{array}{l}
\left\Vert z^{i+1}\right\Vert ^{2}-\left\Vert
z^{i}\right\Vert ^{2}+\left\Vert
z^{i+1}-z^{i}\right\Vert ^{2}+\displaystyle\frac{2k\nu }{%
\alpha }\left\Vert z^{i+1}\right\Vert ^{2}\bigskip \\ 
\leq \displaystyle\frac{2k\nu }{\alpha }\left\Vert \nabla \u^{i+1}\right\Vert
\left\Vert z^{i+1}\right\Vert +2k\left\Vert {\rm curl}\, \f^{i+1}\right\Vert \left\Vert z^{i+1}\right\Vert \bigskip \\ 
+2k\left\Vert \nabla \left( \h^{i+1}\cdot \nabla \h
^{i+1}\right) \right\Vert \left\Vert
z^{i+1}\right\Vert \bigskip \\ 
\leq \displaystyle\frac{2k\nu }{\alpha }\left( C_{\varepsilon _{1}}\left\Vert \nabla 
\u^{i+1}\right\Vert ^{2}+\varepsilon
_{1}^{2}\left\Vert z^{i+1}\right\Vert ^{2}\right) +2kC_{\varepsilon _{2}}\left\Vert {\rm curl}f^{i+1}\right\Vert
^{2}\bigskip \\ 
+2k\varepsilon _{2}\left\Vert z^{i+1}\right\Vert ^{2}+2kC\left\Vert z^{i+1}\right\Vert \bigskip \\ 
\leq \displaystyle\frac{2k\nu }{2\alpha \varepsilon _{1}}\left\Vert \nabla \u
^{i+1}\right\Vert^{2}+\frac{k\nu \varepsilon
_{1}}{\alpha }\left\Vert z^{i+1}\right\Vert ^{2}+2\frac{2k}{2\varepsilon _{2}}\left\Vert {\rm curl}\,\f^{i+1}\right\Vert
^{2}\bigskip \\ 
+k\varepsilon _{2}\left\Vert z^{i+1}\right\Vert^{2}+\displaystyle\frac{k}{\varepsilon _{3}}+\ k\varepsilon _{3}\left\Vert
z^{i+1}\right\Vert ^{2}.%
\end{array}%
\]%
Now adding to $i=0$ to $n-1$ we obtain%
\[
\begin{array}{l}
\displaystyle\sum_{i=0}^{n-1}\left( \left\Vert z^{i+1}\right\Vert ^{2}-\left\Vert z^{i}\right\Vert ^{2}\right) +\sum_{i=0}^{n-1}\left\Vert
z^{i+1}-z^{i}\right\Vert ^{2}\bigskip \\ 
+\left( \displaystyle\frac{2k\nu }{\alpha }-\frac{k\nu }{\alpha }\varepsilon
_{1}-k\varepsilon _{2}-kC\varepsilon _{3}\right)
\displaystyle\sum_{i=0}^{n-1}\left\Vert z^{i+1}\right\Vert ^{2}\bigskip \\ 
\leq \displaystyle\frac{k\nu }{\alpha \varepsilon _{1}}\sum_{i=0}^{n-1}\left%
\Vert \nabla \u^{i+1}\right\Vert ^{2}+\frac{k}{\varepsilon _{2}}%
\sum_{i=0}^{n-1}\left\Vert {\rm curl}\,\f^{i+1}\right\Vert
^{2}+C\frac{T}{\varepsilon _{3}}.%
\end{array}%
\]%
Where the term $\displaystyle\frac{k}{\varepsilon _{3}}\sum_{i=0}^{n-1}C=\frac{k%
}{\varepsilon _{3}}Cn\leq \frac{k}{\varepsilon _{3}}C\frac{T}{k}=C\frac{T%
}{\varepsilon _{3}}.$\ Now considering $\varepsilon _{1}=1$ and $\varepsilon
_{2}=\varepsilon _{3}=\nu /2\alpha $ the above equation can be written as%
\[
\begin{array}{l}
\displaystyle\sum_{i=0}^{n-1}\left( \left\Vert z^{i+1}\right\Vert ^{2}-\left\Vert z^{i}\right\Vert ^{2}\right) +\sum_{i=0}^{n-1}\left\Vert
z^{i+1}-z^{i}\right\Vert ^{2}\bigskip \\ 
\leq \displaystyle\frac{k\nu }{\alpha }\sum_{i=0}^{n-1}\left\Vert \nabla \u^{i+1}\right\Vert ^{2}+\frac{2\alpha k}{\nu 
}\sum_{i=0}^{n-1}\left\Vert {\rm curl}\,\f^{i+1}\right\Vert
^{2}+\frac{2\alpha }{\nu }CT,%
\end{array}%
\]%
consequently,%
\[
\begin{array}{l}
\left\Vert z^{n}\right\Vert ^{2}+\displaystyle\sum_{i=0}^{n-1}\left\Vert z^{i+1}-z^{i}\right\Vert
^{2}\leq \left( \frac{C^{2}\overline{C}}{%
\alpha }\left\Vert \f\right\Vert _{L^{2}(\Omega ,\times ]
0,t^{n}[ ) }^{2}+\frac{\nu }{2\alpha }\left\Vert \u%
_{0}\right\Vert _{\alpha }^{2}+\frac{1}{\alpha }\left\Vert \h%
_{0}\right\Vert \right) \\ 
+\displaystyle\frac{2\alpha k}{\nu }\left\Vert {\rm curl}\,\f\right\Vert
_{L^{2}(\Omega \times (0,t^{n})) }^{2}+\frac{%
2\alpha }{\nu }CT+\left\Vert z_{0}\right\Vert ^{2}.%
\end{array}%
\]
\end{proof}

\begin{proposition}
Let 
\[
C_{Z}=\sup_{0\leq n\leq N-1}\left\Vert z^{n}\right\Vert ^{2} .
\]%
The sequences $\left(( \u^{n+1}-\u^{n})
/k\right) _{n\geq 1}$ and $\left(( \h^{n+1}-\h
^{n}) /k\right) _{n\geq 1},$ satisfy the following uniform a priori
estimates:%
\[
\begin{array}{l}
\displaystyle\sum_{i=0}^{n-1}\frac{1}{2k}\left\Vert \u^{i+1}-\u%
^{i}\right\Vert _{\alpha }^{2}\leq \frac{\left( C_{1}\nu
+C_{z}S_{4}^{2}\right) ^{2}}{2\alpha }\left( \frac{C^{2}\overline{C}}{\nu
^{2}}\left\Vert \f\right\Vert _{L^{2}(\Omega ,\times ]
0,t^{n}[) }^{2}\right. \bigskip \\ 
\left. +\displaystyle\frac{1}{\nu }\left\Vert \u_{0}\right\Vert _{\alpha }^{2}+%
\frac{1}{\nu }\left\Vert \h_{0}\right\Vert ^{2}\right) +C_{2}^{2}\left\Vert \f\right\Vert
_{L^{2}\left( \Omega \times \left] 0,t^{n}\right[ \right) }^{2}+DT,\bigskip
\\ 
\displaystyle\sum_{i=0}^{n-1}\frac{1}{2k}\left\Vert \h^{i+1}-\h%
^{i}\right\Vert^{2}\leq \frac{(
C_{1}\nu +C_{z}S_{4}^{2}) ^{2}}{2\alpha }\left( \frac{C^{2}\overline{C}%
}{\nu ^{2}}\left\Vert \f\right\Vert _{L^{2}(\Omega ,\times %
] 0,t^{n}[) }^{2} \right)\bigskip \\ 
\left. +\displaystyle\frac{1}{\nu }\left\Vert \u_{0}\right\Vert _{\alpha }^{2}+%
\frac{1}{\nu }\left\Vert \h_{0}\right\Vert ^{2}\right) +C_{2}^{2}\left\Vert \f\right\Vert
_{L^{2}(\Omega ,\times ] 0,t^{n}[) }^{2}+DT.%
\end{array}%
\]
\end{proposition}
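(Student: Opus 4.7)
The plan is to test the first two equations of (\ref{A1}) by the time increments $\u^{i+1}-\u^i$ and $\h^{i+1}-\h^i$ respectively, recovering an identity whose left-hand side contains $\|\u^{i+1}-\u^i\|_\alpha^2/k$ (resp.\ $\|\h^{i+1}-\h^i\|^2/k$), then bound the right-hand side using the a priori estimates already established, absorb by Young's inequality, and sum over $i$.

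Testing the momentum equation of (\ref{A1}) by $\u^{i+1}-\u^i\in V$ (so that the pressure terms drop because the difference is divergence-free) yields
\[
\frac{1}{k}\|\u^{i+1}-\u^i\|_\alpha^2=-\nu(\nabla\u^{i+1},\nabla(\u^{i+1}-\u^i))-(z^i\times\u^{i+1},\u^{i+1}-\u^i)+(\h^{i+1}\cdot\nabla\h^{i+1}+\f^{i+1},\u^{i+1}-\u^i).
\]
The viscous term is bounded by $C_1\nu\|\nabla\u^{i+1}\|\,\|\nabla(\u^{i+1}-\u^i)\|$. For the gyroscopic term I would use the 2D Sobolev embedding $H^1_0(\Omega)\hookrightarrow L^4(\Omega)$ from (\ref{N2}) together with the uniform bound $\sup_n\|z^n\|\le C_Z^{1/2}$ supplied by the preceding proposition, obtaining
\[
|(z^i\times\u^{i+1},\u^{i+1}-\u^i)|\le\|z^i\|\,\|\u^{i+1}\|_{L^4}\,\|\u^{i+1}-\u^i\|_{L^4}\le C_Z S_4^2\,\|\nabla\u^{i+1}\|\,\|\nabla(\u^{i+1}-\u^i)\|
\]
(matching the form in the statement). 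For $(\h^{i+1}\cdot\nabla\h^{i+1},\u^{i+1}-\u^i)$ I would invoke the preceding Corollary, which gives $\|\h^{i+1}\cdot\nabla\h^{i+1}\|\le C$, and then Poincar\'e.

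Next I would apply Young's inequality in the form $ab\le\frac{k}{2\alpha}a^2+\frac{\alpha}{2k}b^2$ to every product containing $\|\nabla(\u^{i+1}-\u^i)\|$, so that $\frac{\alpha}{2k}\|\nabla(\u^{i+1}-\u^i)\|^2$ is absorbed into the left-hand side. The residue produces the factor $(C_1\nu+C_ZS_4^2)^2/(2\alpha)$ in front of $k\|\nabla\u^{i+1}\|^2$. Summing for $i=0,\ldots,n-1$, the sum $\sum_i k\|\nabla\u^{i+1}\|^2$ is controlled by the first estimate of (\ref{M6}) from Proposition~1, while $\sum_i k\|\f^{i+1}\|^2=\|\f\|_{L^2(\Omega\times(0,t^n))}^2$ and the constant contributions accumulate to $DT$ since $nk\le T$. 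For the magnetic field I would test the second equation of (\ref{A1}) by $\h^{i+1}-\h^i$; the only nonlinear contributions are $(\u^{i+1}\cdot\nabla\h^{i+1},\h^{i+1}-\h^i)$ and $(\h^{i+1}\cdot\nabla\u^{i+1},\h^{i+1}-\h^i)$, both of which I would handle with the same Sobolev embedding plus the uniform bound $\|A\h^{i+1}\|\le C$ from the Corollary, so that the $\h$-bound is structurally identical.

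The main obstacle is the trilinear term $(z^i\times\u^{i+1},\u^{i+1}-\u^i)$: closing it requires simultaneously the uniform control on $z^n$ (which is why $C_Z$ appears in the stated constant), the two-dimensional embedding into $L^4$, and a careful tuning of the Young parameter so that exactly $\alpha/(2k)\|\nabla(\u^{i+1}-\u^i)\|^2$ can be absorbed, producing the $1/(2\alpha)$ denominator. The two-dimensionality is essential here, since in 3D the corresponding Sobolev exponent would be too weak to close this estimate in terms of the first bound of (\ref{M6}).
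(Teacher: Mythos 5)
Your proposal follows essentially the same route as the paper: test the first two equations of (\ref{A1}) by the increments $\u^{i+1}-\u^{i}$ and $\h^{i+1}-\h^{i}$, bound the term $(z^{i}\times\u^{i+1},\u^{i+1}-\u^{i})$ via the $L^{4}$ Sobolev embedding and the uniform bound on $z^{n}$ to get the factor $(C_{1}\nu+C_{z}S_{4}^{2})$, absorb $\frac{\alpha}{2k}\Vert\nabla(\u^{i+1}-\u^{i})\Vert^{2}$ by Young's inequality, control the remaining nonlinear terms by the uniform constants of the Corollary, and sum using (\ref{M6}) and $nk\le T$. This matches the paper's proof step for step, so no further comment is needed.
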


\begin{proof}
Multiplying the first Eq. (\ref{A1}) by $\left( \u^{i+1}-\u
^{i}\right) $ and the second Eq. of (\ref{A1}) by $\left( \h^{i+1}-%
\h^{i}\right) ,$ we obtain%
\begin{equation}
\begin{array}{l}
\displaystyle\frac{1}{k}\left( \u^{i+1}-\u^{i},\u^{i+1}-\u
^{i}\right) -\frac{\alpha }{k}\Delta \left( \u^{i+1}-\u
^{i},\u^{i+1}-\u^{i}\right) -\nu \left( \Delta \u
^{i+1},\u^{i+1}-\u^{i}\right) \bigskip \\ 
+\left( z^{i}\times \u^{i+1},\u^{i+1}-\u^{i}\right)
=\left( \f^{i+1},\u^{i+1}-\u^{i}\right) +\left( \h
^{i+1}\cdot \nabla \h^{i+1},\u^{i+1}-\u^{i}\right),
\bigskip \\ 
\displaystyle\frac{1}{k}\left( \h^{i+1}-\h^{i},\h^{i+1}-\h%
^{i}\right) -\left( \Delta \h^{i+1},\h^{i+1}-\h
^{i}\right) +\left( \u^{i+1}\cdot \nabla \h^{i+1},\h
^{i+1}-\h^{i}\right) \bigskip \\ 
-\left( \h^{i+1}\cdot \nabla \u^{i+1},\h^{i+1}-%
\h^{i}\right) =0,%
\end{array}
\label{A8}
\end{equation}%
then adding the above equations and using that%
\[
\left\vert \left( z^{i}\times \u^{i+1},\u^{i+1}-\u
^{i}\right) \right\vert \leq C_{z}S_{4}^{2}\left\Vert \u
^{i+1}\right\Vert _{H^{1}(\Omega) }\left\Vert \u^{i+1}-%
\u^{i}\right\Vert _{H^{1}(\Omega) },
\]%
we have%
%\[
%\begin{array}{l}
%\displaystyle\frac{1}{k}\left\Vert \u^{i+1}-\u^{i}\right\Vert _{\alpha
%}^{2}+\frac{1}{k}\left\Vert \h^{i+1}-\h^{i}\right\Vert
%^{2}\bigskip \\ 
%\leq C_{1}\nu \left\Vert \nabla \u^{i+1}\right\Vert \left\Vert \nabla \left( \u^{i+1}-\u
%^{i}\right) \right\Vert \bigskip \\ 
%+C_{z}S_{4}^{2}\left\Vert \u^{i+1}\right\Vert _{H^{1}(\Omega
%) }\left\Vert \u^{i+1}-\u^{i}\right\Vert _{H^{1}(
%\Omega) }\bigskip \\ 
%+C_{2}\left\Vert \f^{i+1}\right\Vert \left\Vert 
%\u^{i+1}-\u^{i}\right\Vert +C_{3}\left\Vert \h^{i+1}\cdot \nabla \h^{i+1}\right\Vert
%\left\Vert \u^{i}\right\Vert
%\bigskip \\ 
%+C_{4}\left\Vert A\h^{i+1}\right\Vert \left\Vert \h^{i+1}-\h^{i}\right\Vert +C_{5}\left\Vert \u^{i+1}\cdot \nabla \h%
%^{i+1}\right\Vert\left\Vert \h%
%^{i}\right\Vert\bigskip \\ 
%+C_{6}\left\Vert \h^{i+1}\cdot \nabla \u^{i+1}\right\Vert
%\left\Vert \h^{i}\right\Vert
%%
%\end{array}%
%\]%
%where%
\[
\begin{array}{l}
\displaystyle\frac{1}{k}\left\Vert \u^{i+1}-\u^{i}\right\Vert _{\alpha
}^{2}+\frac{1}{k}\left\Vert \h^{i+1}-\h^{i}\right\Vert
^{2}\bigskip \\ 
\leq \left( C_{1}\nu +C_{z}S_{4}^{2}\right) ^{2}\displaystyle\frac{\varepsilon _{1}}{%
2\alpha }\left\Vert \u^{i+1}\right\Vert _{H^{1}(\Omega)
}^{2}+\frac{\alpha }{2\varepsilon _{1}}\left\Vert \u^{i+1}-\u
^{i}\right\Vert _{H^{1}(\Omega) }^{2}\bigskip \\ 
+\displaystyle\frac{C_{2}^{2}\varepsilon _{2}}{2}\left\Vert \f^{i+1}\right\Vert
^{2}+\frac{1}{2\varepsilon _{2}}\left\Vert 
\u^{i+1}-\u^{i}\right\Vert ^{2}+\frac{C_{3}^{2}k\varepsilon _{3}}{2}\left\Vert \h^{i+1}\cdot
\nabla \h^{i+1}\right\Vert ^{2}\bigskip
\\ 
+\displaystyle\frac{1}{2k\varepsilon _{3}}\left\Vert \u^{i+1}-\u
^{i}\right\Vert^{2}+C_{4}^{2}k\frac{%
\varepsilon _{4}}{2}\left\Vert A\h^{i+1}\right\Vert ^{2}\bigskip \\ 
+\displaystyle\frac{1}{2k\varepsilon _{4}}\left\Vert \h^{i+1}-\h
^{i}\right\Vert ^{2}+C_{5}k\frac{\varepsilon
_{5}}{2}\left\Vert \u^{i+1}\cdot \nabla \h^{i+1}\right\Vert
^{2}\bigskip \\ 
+\displaystyle\frac{1}{2k\varepsilon _{5}}\left\Vert \h^{i+1}-\h
^{i}\right\Vert ^{2}+\frac{%
C_{6}^{2}k\varepsilon _{6}}{2}\left\Vert \h^{i+1}\cdot \nabla 
\u^{i+1}\right\Vert^{2}+\frac{1}{%
2k\varepsilon _{6}}\left\Vert \h^{i+1}-\h^{i}\right\Vert
^{2},%
\end{array}%
\]%
then put $\varepsilon _{1}=k,\varepsilon _{2}=2k,\varepsilon
_{3}=2,\varepsilon _{4}=2,\varepsilon _{5}=4$ and $\varepsilon _{7}=4,$ of
the above equation can be written%
\begin{equation}
\begin{array}{l}
\displaystyle\frac{1}{2k}\left\Vert \u^{i+1}-\u^{i}\right\Vert _{\alpha
}^{2}+\frac{1}{2k}\left\Vert \h^{i+1}-\h^{i}\right\Vert
\bigskip \\ 
\leq \left( C_{1}\nu +C_{z}S_{4}^{2}\right) ^{2}\displaystyle\frac{k}{2\alpha }%
\left\Vert \u^{i+1}\right\Vert _{H^{1}(\Omega)
}^{2}+C_{2}^{2}k\left\Vert \f^{i+1}\right\Vert +C_{3}k\left\Vert \h^{i+1}\cdot \nabla \h
^{i+1}\right\Vert ^{2}\bigskip \\ 
+C_{4}^{2}k\left\Vert A\h^{i+1}\right\Vert ^{2}+2C_{5}^{2}k\left\Vert \u^{i+1}\cdot \nabla \h
^{i+1}\right\Vert ^{2}+2C_{6}^{2}k\left\Vert 
\h^{i+1}\cdot \nabla \u^{i+1}\right\Vert ^{2}.%
\end{array}%
 \label{notte1}
\end{equation}
On the other hand, recalling the Corollary 7, we have%
\[
\begin{array}{l}
\left\Vert \h^{i+1}\cdot \nabla \h^{i+1}\right\Vert
^{2}\leq d_{1},\,\,\,\left\Vert A\h
^{i+1}\right\Vert ^{2}\leq d_{2}\bigskip \\ 
\left\Vert \u^{i+1}\cdot \nabla \h^{i+1}\right\Vert
^{2}\leq d_{3},\,\,\,\left\Vert \h
^{i+1}\cdot \nabla \u^{i+1}\right\Vert^{2}\leq d_{4}%
\end{array}%
\]%
and summing from $i=0$ to $n-1$ in (\ref{notte1}), we obtain%
\[
\begin{array}{l}
\displaystyle\sum_{i=0}^{n-1}\frac{1}{2k}\left\Vert \u^{i+1}-\u
^{i}\right\Vert _{\alpha }^{2}+\sum_{i=0}^{n-1}\frac{1}{2k}%
\left\Vert \h^{i+1}-\h^{i}\right\Vert \bigskip \\ 
\leq \displaystyle\sum_{i=0}^{n-1}\left( C_{1}\nu +C_{z}S_{4}^{2}\right) ^{2}%
\frac{k}{2\alpha }\left\Vert \u^{i+1}\right\Vert _{H^{1}(
\Omega) }^{2}+C_{2}^{2}\sum_{i=0}^{n-1}k\left\Vert \f
^{i+1}\right\Vert ^{2}\bigskip \\ 
+\displaystyle\sum_{i=0}^{n-1}Dk,%
\end{array}%
\]%
where $D=C_{3}^{2}d_{1}+C_{4}^{2}d_{2}+2C_{5}^{2}d_{3}+2C_{6}^{2}d_{4},$
then observed $n\leq N=T/k$ we can write $\displaystyle\sum_{i=0}^{n-1}Dk=Dnk\leq
DT,$ then from the above inequality we obtain%
\[
\begin{array}{l}
\displaystyle\sum_{i=0}^{n-1}\frac{1}{2k}\left\Vert \u^{i+1}-\u
^{i}\right\Vert _{\alpha }^{2}+\sum_{i=0}^{n-1}\frac{1}{2k}%
\left\Vert \h^{i+1}-\h^{i}\right\Vert \bigskip \\ 
\leq \displaystyle\sum_{i=0}^{n-1}\left( C_{1}\nu +C_{z}S_{4}^{2}\right) ^{2}%
\frac{k}{2\alpha }\left\Vert \u^{i+1}\right\Vert _{H^{1}(
\Omega) }^{2}+C_{2}^{2}\sum_{i=0}^{n-1}k\left\Vert \f%
^{i+1}\right\Vert^{2}+DT.%
\end{array}%
\]%
Indeed, from (\ref{M6}) we obtain%
\begin{equation}
\begin{array}{l}
\displaystyle\sum_{i=0}^{n-1}\frac{1}{2k}\left\Vert \u^{i+1}-\u
^{i}\right\Vert _{\alpha }^{2}+\sum_{i=0}^{n-1}\frac{1}{2k}%
\left\Vert \h^{i+1}-\h^{i}\right\Vert \bigskip \\ 
\leq \displaystyle\frac{\left( C_{1}\nu +C_{z}S_{4}^{2}\right) ^{2}}{\alpha }\left( 
\frac{C^{2}\overline{C}}{\nu ^{2}}\left\Vert \f\right\Vert
_{L^{2}(\Omega ,\times ] 0,t^{n}[ ) }^{2}+\frac{1}{
\nu }\left\Vert \u_{0}\right\Vert _{\alpha }^{2}+\frac{1}{\nu }
\left\Vert \h_{0}\right\Vert ^{2}\right) \bigskip \\ 
+2C_{2}^{2}\left\Vert \f\right\Vert _{L^{2}(\Omega ,\times 
] 0,t^{n} [ ) }^{2}+2DT.
\end{array}
\label{A9}
\end{equation}
From which we get the result.
\end{proof}

\begin{proposition}
The sequence $\left( p^{n}\right) _{n\geq 1}$ and $(\omega^{n})
_{n\geq 1}$ satisfy the following uniform a priori estimates:%
\[
\begin{array}{l}
\displaystyle\sum_{i=0}^{n-1}\left[ k\left\Vert p^{i+1}\right\Vert ^{2}+k\left\Vert \omega^{i+1}\right\Vert ^{2}\right] \leq \bigskip \\ 
\left[ \displaystyle\frac{\left( C_{1}\nu +C_{z}S_{4}^{2}\right) ^{2}L}{\alpha }%
+4C_{z}^{2}S_{4}^{4}\right] \left( \displaystyle\frac{C^{2}\overline{C}}{\nu ^{2}}%
\left\Vert \f\right\Vert _{L^{2}(\Omega ,\times ] 0,t^{n}%
[) }^{2}+\displaystyle\frac{1}{\nu }\left\Vert \u_{0}\right\Vert
_{\alpha }^{2}+\frac{1}{\nu }\left\Vert \h_{0}\right\Vert
^{2}\right) \bigskip \\ 
+\left[ 4C_{6}^{2}C+2C_{2}^{2}L\right] \left\Vert \f\right\Vert
^{2}+[ \overline{D}+2LD] T,\quad
1\leq n\leq N.%
\end{array}%
\]
\end{proposition}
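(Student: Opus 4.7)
The plan is to exploit the divergence isomorphism (Theorem 1) to extract $\|p^{n+1}\|$ and $\|\omega^{n+1}\|$ by testing the discrete momentum and magnetic equations against carefully chosen divergence-carrying fields. Concretely, for each $n\ge 0$, I would pick $\v^{n+1}\in V^{\perp}$ with ${\rm div}\,\v^{n+1}=p^{n+1}$ and $|\v^{n+1}|_{H^{1}(\Omega)}\le(1/\beta)\|p^{n+1}\|$, and analogously $\w^{n+1}\in V^{\perp}$ with ${\rm div}\,\w^{n+1}=\omega^{n+1}$ and $|\w^{n+1}|_{H^{1}(\Omega)}\le(1/\beta)\|\omega^{n+1}\|$. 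Pairing the first equation of (\ref{A1}) with $\v^{n+1}$ and integrating by parts converts the pressure gradient into $\|p^{n+1}\|^{2}$ (the $\h^{2}$ contribution being merged into the total pressure), and similarly the magnetic equation tested against $\w^{n+1}$ yields $\|\omega^{n+1}\|^{2}$.

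Next I would bound each remaining term. The dissipative contributions produce factors $\nu|\u^{n+1}|_{H^{1}}|\v^{n+1}|_{H^{1}}$ and $|\h^{n+1}|_{H^{1}}|\w^{n+1}|_{H^{1}}$. The Lamb term is controlled through the Sobolev embedding $H^{1}\hookrightarrow L^{4}$: since $\|z^{n}\|^{2}\le C_{z}$, one obtains $|(z^{n}\times\u^{n+1},\v^{n+1})|\le C_{z}^{1/2}S_{4}^{2}|\u^{n+1}|_{H^{1}}|\v^{n+1}|_{H^{1}}$, which is the origin of the aggregate factor $(C_{1}\nu+C_{z}S_{4}^{2})$ appearing after Young's inequality. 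The magnetic nonlinearities $(\h^{n+1}\cdot\nabla)\h^{n+1}$, $(\u^{n+1}\cdot\nabla)\h^{n+1}$, and $(\h^{n+1}\cdot\nabla)\u^{n+1}$ are dominated using Corollary 7 combined with standard embeddings; the forcing contributes $\|\f^{n+1}\|\,|\v^{n+1}|_{H^{1}}$; and the discrete time-increment terms $(1/k)(\u^{n+1}-\u^{n})-(\alpha/k)\Delta(\u^{n+1}-\u^{n})$ paired with $\v^{n+1}$ produce $(1/k)\|\u^{n+1}-\u^{n}\|_{\alpha}|\v^{n+1}|_{H^{1}}$, with the analogous inequality for $\h$.

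After applying Young's inequality and multiplying by $k$, each resulting pointwise inequality is summed from $i=0$ to $n-1$, with the Young parameters arranged so that the factors $|\v^{n+1}|_{H^{1}}^{2}\le\|p^{n+1}\|^{2}/\beta^{2}$ and $|\w^{n+1}|_{H^{1}}^{2}\le\|\omega^{n+1}\|^{2}/\beta^{2}$ can be absorbed on the left. The energy bounds (\ref{M6}) then handle $\sum k|\nabla\u^{i+1}|^{2}$ and $\sum k|\nabla\h^{i+1}|^{2}$, the preceding proposition controls $\sum(1/k)\|\u^{i+1}-\u^{i}\|_{\alpha}^{2}$ and its magnetic counterpart through (\ref{A9}), while the uniform-in-$i$ bounds from Corollary 7 generate a $DT$ contribution via $\sum_{i=0}^{n-1}k\le T$. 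The isolated $4C_{z}^{2}S_{4}^{4}$ factor arises when the Lamb-term contribution is squared directly (rather than routed through the time-increment estimate), capturing its direct dependence on $\|z^{n}\|^{2}\le C_{z}$.

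The main obstacle is bookkeeping rather than conceptual: each Young parameter must be tuned so that the $|\v^{n+1}|_{H^{1}}^{2}$ and $|\w^{n+1}|_{H^{1}}^{2}$ factors produced by every nonlinear and time-increment term can be absorbed into the left without disturbing the scaling of the right. Once this balancing is completed and the earlier a priori estimates are substituted, the constants $L$, $\overline D$, $D$, $C_{2}$, $C_{6}$ appearing in the stated bound emerge as explicit aggregates of $1/\beta$, the Poincar\'e and Sobolev constants, and the generic constants provided by Corollary 7.
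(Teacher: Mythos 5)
Your proposal follows essentially the same route as the paper: test the discrete equations against fields $v_1,v_2\in V^{\perp}$ supplied by the divergence isomorphism of Theorem 1 so that the pressure gradients yield $\|p^{i+1}\|^2$ and $\|\omega^{i+1}\|^2$, bound the Lamb term via $H^1\hookrightarrow L^4$ and $\|z^n\|^2\le C_z$, control the magnetic nonlinearities by Corollary 7, tune the Young parameters to absorb the $|v_1|_{H^1}^2,|v_2|_{H^1}^2$ factors, and close by summing and substituting the estimates (\ref{M6}) and (\ref{A9}). The only cosmetic difference is that the paper discards the viscous and $\alpha$-Laplacian contributions outright, since they vanish by the $V^{\perp}$-orthogonality $(\nabla v_1,\nabla w)=0$ for $w\in V$, whereas you retain and bound them; this costs nothing.
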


\begin{proof}
Let $v_{1},v_{2}\in V^{\perp }=\left\{ v\in H_{0}^{1}\left( \Omega \right)
;\forall w\in V,\left( \nabla v,\nabla w\right) =0\right\} $ such that $%
{\rm div}\,v_{1}=p^{i+1}$ and ${\rm grad}\,v_{2}=\omega^{i+1}.$ Then by
multiplying the first and second eq.(\ref{A1}) by $v_{1}$ and $v_{2}$
respectively, and adding the results, we have%

\[
\begin{array}{l}
\displaystyle\frac{1}{k}\left( \u^{i+1}-\u^{i},v_{1}\right) +\left(
z^{i}\times \u^{i+1},v_{1}\right) +\left( p^{i+1},p^{i+1}\right)
\bigskip \\ 
-\left( \h^{i+1}\cdot \nabla \h^{i+1},v_{1}\right) +\displaystyle\frac{1%
}{k}\left( \h^{i+1}-\h^{i},v_{2}\right) +\left( \u
^{i+1}\cdot \nabla \h^{i+1},v_{2}\right) \bigskip \\ 
-\left( \h^{i+1}\cdot \nabla \u^{i+1},v_{2}\right) +\left(
\omega^{i+1}, \omega^{i+1}\right) =\left( \f^{i+1},v_{1}\right) .%
\end{array}%
\]%
where, we consider that $v_1,v_2 \in V^{\perp}$.
Thus, we can write
\[
\begin{array}{l}
k\left\Vert p^{i+1}\right\Vert ^{2}+k\left\Vert \omega^{i+1}\right\Vert ^{2}\leq
C_{1}\displaystyle\frac{\delta _{1}}{2}\left\Vert \u^{i+1}-\u
^{i}\right\Vert ^{2}+\displaystyle\frac{1}{2\delta _{1}}
\left\Vert v_{1}\right\Vert^{2}\bigskip \\ 
+kC_{z}^{2}S_{4}^{4}\displaystyle\frac{\delta _{2}}{2}\left\Vert \u
^{i+1}\right\Vert _{H^{1}}^{2}+\frac{k}{2\delta _{2}}\left\Vert
v_{1}\right\Vert _{H^{1}}^{2}+kC_{2}^{2}\frac{\delta _{3}}{2}\left\Vert 
\h^{i+1}\cdot \nabla \h^{i+1}\right\Vert ^{2}\bigskip \\ 
+\displaystyle\frac{k}{2\delta _{3}}\left\Vert v_{1}\right\Vert ^{2}+C_{3}^{2}\frac{\delta _{4}}{2}\left\Vert \h^{i+1}-
\h^{i}\right\Vert ^{2}+\frac{1}{
2\delta _{4}}\left\Vert v_{2}\right\Vert ^{2}\bigskip \\ 
+kC_{4}^{2}\frac{\delta _{5}}{2}\left\Vert \u^{i+1}\cdot \nabla 
\h^{i+1}\right\Vert ^{2}+\displaystyle\frac{k}{
2\delta _{5}}\left\Vert v_{2}\right\Vert ^{2}+kC_{5}^{2}\displaystyle\frac{\delta _{6}}{2}\left\Vert \h^{i+1}\cdot
\nabla \u^{i+1}\right\Vert ^{2}\bigskip
\\ 
+\displaystyle\frac{k}{2\delta _{6}}\left\Vert v_{2}\right\Vert ^{2}+kC_{6}^{2}\frac{\delta _{7}}{2}\left\Vert \f
^{i+1}\right\Vert ^{2}+\frac{k}{2\delta _{7}}
\left\Vert v_{1}\right\Vert ^{2}.
\end{array}%
\]%
Now, we considering that $H^{1}\hookrightarrow L^{2},$ i.e., 
\[
\left\Vert v_{1}\right\Vert \leq C\left\Vert
\nabla v_{1}\right\Vert =C\left\Vert
p^{i+1}\right\Vert  
\]
and 
\[
\left\Vert v_{2}\right\Vert \leq C\left\Vert
\nabla v_{2}\right\Vert =C\left\Vert
w^{i+1}\right\Vert .
\]
From which, we obtain
\[
\begin{array}{l}
k\left\Vert p^{i+1}\right\Vert ^{2}+k\left\Vert \omega^{i+1}\right\Vert ^{2}\leq
C_{1}\displaystyle\frac{\delta _{1}}{2}\left\Vert \u^{i+1}-\u%
^{i}\right\Vert ^{2}+\displaystyle\frac{C}{2\delta _{1}}%
\left\Vert p^{i+1}\right\Vert ^{2}\bigskip \\ 
+kC_{z}^{2}S_{4}^{4}\displaystyle\frac{\delta _{2}}{2}\left\Vert \u
^{i+1}\right\Vert _{H^{1}}^{2}+\frac{k}{2\delta _{2}}\left\Vert
p^{i+1}\right\Vert ^{2}+kC_{2}^{2}\frac{%
\delta _{3}}{2}\left\Vert \h^{i+1}\cdot \nabla \h
^{i+1}\right\Vert ^{2}\bigskip \\ 
+\displaystyle\frac{Ck}{2\delta _{3}}\left\Vert p^{i+1}\right\Vert ^{2}+C_{3}^{2}\frac{\delta _{4}}{2}\left\Vert \h^{i+1}-%
\h^{i}\right\Vert ^{2}+\frac{C}{%
2\delta _{4}}\left\Vert \omega^{i+1}\right\Vert ^{2}\bigskip \\ 
+kC_{4}^{2}\displaystyle\frac{\delta _{5}}{2}\left\Vert \u^{i+1}\cdot \nabla 
\h^{i+1}\right\Vert ^{2}+\frac{Ck}{%
2\delta _{5}}\left\Vert \omega^{i+1}\right\Vert ^{2}+kC_{5}^{2}\frac{\delta _{6}}{2}\left\Vert \h^{i+1}\cdot
\nabla \u^{i+1}\right\Vert ^{2}\bigskip
\\ 
+\displaystyle\frac{Ck}{2\delta _{6}}\left\Vert \omega^{i+1}\right\Vert ^{2}+kC_{6}^{2}\frac{\delta _{7}}{2}\left\Vert \f
^{i+1}\right\Vert^{2}+\frac{Ck}{2\delta _{7}}%
\left\Vert p^{i+1}\right\Vert ^{2},%
\end{array}%
\]%
then, taking $\delta _{1}=4C/k,$ $\delta _{2}=4,$ $\delta _{3}=\delta
_{7}=4C $ and $\delta _{4}=2C/k,$ $\delta _{5}=\delta _{6}=4C$,  
%we have%
%\[
%\begin{array}{l}
%\displaystyle\frac{k}{2}\left\Vert p^{i+1}\right\Vert ^{2}+%
%\frac{k}{2}\left\Vert \omega^{i+1}\right\Vert ^{2}\leq \frac{2C_{1}^{2}C}{k}\left\Vert \u^{i+1}-\u
%^{i}\right\Vert ^{2}+2C_{z}^{2}S_{4}^{4}k\left\Vert \u^{i+1}\right\Vert
%_{H^{1}}^{2}\bigskip \\ 
%+2C_{2}^{2}Ck\left\Vert \h^{i+1}\cdot \nabla \h
%^{i+1}\right\Vert ^{2}+C_{3}^{2}\frac{C}{k}%
%\left\Vert \h^{i+1}-\h^{i}\right\Vert ^{2}+2C_{4}^{2}Ck\left\Vert \u^{i+1}\cdot \nabla \h
%^{i+1}\right\Vert ^{2}\bigskip \\ 
%+2C_{5}^{2}Ck\left\Vert \h^{i+1}\cdot \nabla \u
%^{i+1}\right\Vert ^{2}+2C_{6}^{2}Ck\left\Vert 
%\f^{i+1}\right\Vert ^{2}.%
%\end{array}%
%\]%
 addying from $i=0$ to $n-1$ and multiplying by $2,$ we have%
\[
\begin{array}{l}
\displaystyle\sum_{i=0}^{n-1}\left[ k\left\Vert p^{i+1}\right\Vert ^{2}+k\left\Vert w^{i+1}\right\Vert {2}\right] \leq 4C_{1}^{2}C\sum_{i=0}^{n-1}\frac{1}{k}%
\left\Vert \u^{i+1}-\u^{i}\right\Vert ^{2}\bigskip \\ 
+4C_{z}^{2}S_{4}^{4}\displaystyle\sum_{i=0}^{n-1}k\left\Vert \u
^{i+1}\right\Vert _{H^{1}}^{2}+2C_{3}^{2}C\sum_{i=0}^{n-1}\frac{1}{k%
}\left\Vert \h^{i+1}-\h^{i}\right\Vert ^{2}+4C_{6}^{2}C\left\vert \f\right\vert _{L^{2}(
\Omega ,] 0,T[) }^{2}\bigskip \\ 
+\displaystyle\sum_{i=0}^{n-1}\left(
4C_{2}^{2}Cd_{1}+4C_{4}^{2}Cd_{3}+4C_{5}^{2}Cd_{4}\right) k,%
\end{array}%
\]%
thus, put $\overline{D}=4C_{2}^{2}Cd_{1}+4C_{4}^{2}Cd_{3}+4C_{5}^{2}Cd_{4}$,
noting that $n\leq T/k$ and using the inequalities (\ref{A9}) and (\ref{M6})
, we have%
\[
\begin{array}{l}
\displaystyle\sum_{i=0}^{n-1}\left[ k\left\Vert p^{i+1}\right\Vert^{2}+k\left\Vert \omega^{i+1}\right\Vert ^{2}\right] \leq \bigskip \\ 
\leq \left[ \displaystyle\frac{\left( C_{1}\nu +C_{z}S_{4}^{2}\right) ^{2}L}{\alpha }%
+4C_{z}^{2}S_{4}^{4}\right] \left( \displaystyle\frac{C^{2}\overline{C}}{\nu ^{2}}%
\left\vert \f\right\vert _{L^{2}(\Omega \times ] 0,t^{n}%
[ ) }^{2}+\displaystyle\frac{1}{\nu }\left\Vert \u_{0}\right\Vert
_{\alpha }^{2}+\frac{1}{\nu }\left\Vert \h_{0}\right\Vert
^{2}\right) \bigskip \\ 
+\left[ 4C_{6}^{2}C+2C_{2}^{2}L\right] \left\vert \f\right\vert
_{L^{2}(\Omega \times] 0,T[ ) }^{2}+\left[ \overline{D}+2LD%
\right] T.%
\end{array}%
\]%
where $L=\max \left\{ 4C_{1}^{2}C,4C_{1}^{2}C\right\} $
\end{proof}

\subsection{Existence of solutions}

Here, it is convenient to transform the sequence $(\u
^{n}) ,(\h^{n}) ,( p^{n}) ,(
\omega^{n}) $ and $( z^{n}) $ into functions. Since $( 
\u^{n}) ,(\h^{n}) $ and $(
z^{n}) $ need to be differentiated, we define the piecewise linear
functions in time:%
\[
\begin{array}{c}
\forall t\in [ t^{n},t^{n+1}] ,\quad \u_{k}(
t) =\u^{n}+\displaystyle\frac{t-t^{n}}{k}\left( \u^{n+1}-\u
^{n}\right) ,\quad 0\leq n\leq N-1\bigskip \\ 
\forall t\in [ t^{n},t^{n+1}] ,\quad \h_{k}(
t) =\h^{n}+\displaystyle\frac{t-t^{n}}{k}\left( \h^{n+1}-\h
^{n}\right) ,\quad 0\leq n\leq N-1\bigskip \\ 
\forall t\in [ t^{n},t^{n+1}] ,\quad z_{k}(t) =z^{n}+%
\displaystyle\frac{t-t^{n}}{k}\left( z^{n+1}-z^{n}\right) ,\quad 0\leq n\leq N-1.%
\end{array}%
\]%
Next, in view of the other terms in (\ref{A1}), we define the step functions:%
\[
\begin{array}{l}
\forall t\in \left[ t^{n},t^{n+1}\right] \quad {\rm and}\quad %
0\leq n\leq N-1;\bigskip \\ 
\f_{k}(t) =\f^{n+1},\quad \w_{k}(
t) =\u^{n+1},\quad \g_{k}(t) =\h
^{n+1},\quad p_{k}(t) =p^{n+1},\bigskip \\ 
\omega_{k}(t) =\omega^{n+1},\quad \zeta _{k}(t) =z^{n+1},\quad
\lambda _{k}(t) =z^{n}.%
\end{array}%
\]%
Then we have the following convergences.

\begin{proposition}
The exist functions $\u, \h\in L^{\infty }(
0,T;V) $ with $\partial \u/\partial t,\partial \h
/\partial t\in L^{2}(0,T;V) ,p,\omega\in L^{2}(
0,T;L_{0}^{2}(\Omega)) $ and $z\in L^{\infty }(
0,T;L^{2}(\Omega)) $ such that a subsequence of $k,$
still denoted by $k,$ satisfies:%
\[
\begin{array}{l}
\lim\limits_{k\rightarrow 0}\u_{k}=\lim\limits_{k\rightarrow 0}%
\w_{k}=\u\qquad {\rm weakly* in \,\,}L^{\infty }(
0,T;V) ,\bigskip \\ 
\lim\limits_{k\rightarrow 0}\h_{k}=\lim\limits_{k\rightarrow 0}%
\g_{k}=\h\qquad {\rm weakly* in \,\,}L^{\infty }(
0,T;V) ,\bigskip \\ 
\lim\limits_{k\rightarrow 0}z_{k}=\lim\limits_{k\rightarrow 0}\zeta
_{k}=\lim\limits_{k\rightarrow 0}\lambda _{k}=z\qquad {\rm weakly* in \,\,}%
L^{\infty }(0,T;L^{2}(\Omega)) ,\bigskip \\ 
\lim\limits_{k\rightarrow 0}p_{k}=p\qquad {\rm weakly \,\, in \,\,}L^{2}(
0,T;L_{0}^{2}(\Omega)) ,\bigskip \\ 
\lim\limits_{k\rightarrow 0}\omega_{k}=\omega\qquad {\rm weakly \,\, in \,\,}L^{2}(
0,T;L_{0}^{2}(\Omega)) ,\bigskip \\ 
\lim\limits_{k\rightarrow 0}\displaystyle\frac{\partial }{\partial t}\u_{k}=%
\displaystyle\frac{\partial }{\partial t}\u\qquad {\rm weakly \,\, in \,\,}L^{2}(
0,T;V) ,\bigskip \\ 
\lim\limits_{k\rightarrow 0}\displaystyle\frac{\partial }{\partial t}\h_{k}=%
\frac{\partial }{\partial t}\h\qquad {\rm weakly \,\, in \,\,}L^{2}(
0,T;V) .%
\end{array}%
\]%
Furthermore,%
\begin{equation}
\begin{array}{c}
\lim\limits_{k\rightarrow 0}\u_{k}=\lim\limits_{k\rightarrow 0}%
\w_{k}=\u\qquad {\rm strongly \,\, in \,\,}L^{\infty }(
0,T;L^{4}(\Omega) ^{2}) ,\bigskip \\ 
\lim\limits_{k\rightarrow 0}\h_{k}=\lim\limits_{k\rightarrow 0}%
\g_{k}=\h\qquad {\rm strongly \,\, in \,\,}L^{\infty }(
0,T;L^{4}(\Omega) ^{2})%
\end{array}
\label{E1}
\end{equation}
\end{proposition}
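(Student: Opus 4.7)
The plan is to derive all the stated convergences from the uniform a priori estimates of Propositions 5, 6, 8, 9 and 10 by standard weak compactness and by Simon's compactness lemma (Theorem 4).

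First, I would convert the discrete bounds into bounds on the interpolated functions. By convexity, $\|\u_k\|_{L^{\infty}(0,T;V)} \le \max_n \|\u^n\|_V$, which is bounded by Proposition 6; the piecewise-constant derivative satisfies
$$\int_0^T \Big\|\frac{\partial \u_k}{\partial t}\Big\|_V^2\,dt \;=\; \sum_{n=0}^{N-1} \frac{1}{k}\,\|\u^{n+1}-\u^n\|_\alpha^{2},$$
which is bounded by Proposition 9; analogous bounds hold for $\h_k$. Proposition 8 gives the $L^{\infty}(0,T;L^{2}(\Omega))$ bound on $z_k$, and the step functions $\w_k,\g_k,\zeta_k,\lambda_k$ inherit the same $L^{\infty}$ bounds since they take the same pointwise values as the underlying sequences. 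Finally, Proposition 10 bounds $p_k,\omega_k$ in $L^{2}(0,T;L_0^{2}(\Omega))$.

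Second, Banach–Alaoglu extracts a subsequence so that each family converges in the appropriate weak or weak-$*$ topology to limits $\u,\h,z,p,\omega$ with the regularity stated. To see that the step versions share the same limits as the piecewise linear ones, I compute for $t\in[t^n,t^{n+1}]$
$$\w_k(t)-\u_k(t) \;=\; \frac{t^{n+1}-t}{k}\bigl(\u^{n+1}-\u^n\bigr),$$
so $\|\w_k-\u_k\|_{L^{\infty}(0,T;V)}\le \max_n \|\u^{n+1}-\u^n\|_\alpha$; since Proposition 9 gives $\|\u^{n+1}-\u^n\|_\alpha^{2}=O(k)$, this difference tends to zero, and likewise for $\g_k-\h_k$. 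For the $z$-family, a direct computation yields
$$\|\zeta_k-z_k\|_{L^{2}(0,T;L^{2}(\Omega))}^{2} \;\le\; \frac{k}{3}\sum_{n=0}^{N-1}\|z^{n+1}-z^n\|^{2} \;\longrightarrow\; 0$$
by Proposition 8, and similarly for $\lambda_k - z_k$, so the weak-$*$ limits of $\zeta_k,\lambda_k,z_k$ coincide.

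Third, for the strong convergences (\ref{E1}) I would apply Simon's Theorem 4 with $X=V$, $E=L^{4}(\Omega)^{2}$, $Y=V$ and $q=\infty$: the embedding $V\hookrightarrow L^{4}(\Omega)^{2}$ is compact in two dimensions by Rellich–Kondrachov, while the uniform bounds on $\u_k,\h_k$ in $L^{\infty}(0,T;V)$ and on $\partial_t\u_k,\partial_t\h_k$ in $L^{2}(0,T;V)\subset L^{1}(0,T;V)$ are precisely the hypotheses of (\ref{N6}). This gives $\u_k\to\u$ and $\h_k\to\h$ strongly in $L^{\infty}(0,T;L^{4}(\Omega)^{2})$; the analogous statements for $\w_k,\g_k$ follow because $\|\w_k-\u_k\|_{L^{\infty}(0,T;V)}\to 0$ combined with $V\hookrightarrow L^{4}$ already yields $\w_k-\u_k\to 0$ in $L^{\infty}(0,T;L^{4}(\Omega)^{2})$, and likewise for $\g_k-\h_k$.

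The most delicate point is the identification of the weak-$*$ limits of the step sequences with those of the piecewise linear ones; the argument rests entirely on the fact that successive iterates differ by $O(\sqrt{k})$ in $V$ (for velocities and magnetic fields) and in an averaged $L^2$-sense (for the auxiliary variable $z$), which is exactly what the earlier a priori estimates provide. Everything else is a routine application of Banach–Alaoglu and Simon's compactness lemma.
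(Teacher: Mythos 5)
Your proposal is correct and follows essentially the same route as the paper: uniform bounds on the interpolants from the earlier propositions, weak-$*$ extraction, identification of the limits of the step functions with those of the piecewise linear ones via the smallness of the increments $\u^{n+1}-\u^{n}$, $\h^{n+1}-\h^{n}$, $z^{n+1}-z^{n}$, and Simon's compactness theorem for the strong convergence. The only (harmless) deviations are that you control $\w_k-\u_k$ in $L^{\infty}(0,T;V)$ using the $O(k)$ bound on each increment from Proposition 9, where the paper uses the $L^{2}(0,T;V)$ norm and the summed estimates, and that you apply Simon's theorem with $q=\infty$ and $Y=V$, which in fact delivers the $L^{\infty}(0,T;L^{4}(\Omega)^{2})$ convergence claimed in (\ref{E1}) more directly than the paper's own choice $q=2$.
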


\begin{proof}
Due to the uniform estimates given in Propositions 5 -10, we can extract a
subsequence (still denoted by $k$) such that:%
\[
\begin{array}{l}
\lim\limits_{k\rightarrow 0}\u_{k}=\u;\quad %
\lim\limits_{k\rightarrow 0}\h_{k}=\h\qquad {\rm weakly* in 
\,\,}L^{\infty }(0,T;V) ,\bigskip \\ 
\lim\limits_{k\rightarrow 0}z_{k}=z\qquad {\rm weakly* in \,\,}L^{\infty
}(0,T;L^{2}(\Omega)) ,\bigskip \\ 
\lim\limits_{k\rightarrow 0}p_{k}=p ;\quad \lim\limits_{k\rightarrow
0}w_{k}=w\qquad {\rm weakly \,\, in \,\,}L^{2}(0,T;L_{0}^{2}(\Omega
)) ,\bigskip \\ 
\lim\limits_{k\rightarrow 0}\displaystyle\frac{\partial }{\partial t}\u_{k}=%
\displaystyle\frac{\partial }{\partial t}\u;\quad \lim\limits_{k\rightarrow 0}%
\displaystyle\frac{\partial }{\partial t}\h_{k}=\frac{\partial }{\partial t}%
\h\qquad {\rm weakly \,\, in \,\,}L^{2}(0,T;V) ,\bigskip \\ 
\lim\limits_{k\rightarrow 0}\w_{k}=\w;\quad %
\lim\limits_{k\rightarrow 0}g_{k}=g\qquad {\rm weakly* in \,\,}L^{\infty
}(0,T;V) ,\bigskip \\ 
\lim\limits_{k\rightarrow 0}\zeta _{k}=\zeta ;\quad %
\lim\limits_{k\rightarrow 0}\lambda _{k}=\lambda \qquad {\rm weakly \,\, in \,\,}%
L^{2}(0,T;L^{2}(\Omega)) .%
\end{array}%
\]%
As far as the function $\w,\g,\zeta $ and $\lambda $ are concerned,
observe that%
\[
\begin{array}{c}
\forall t\in [ t^{n},t^{n+1}] ,\quad \w_{k}(
t) -\u_{k}(t) =\displaystyle\frac{t^{n+1}-t}{k}\left( \u^{n+1}-\u^{n}\right) ,\quad 0\leq n\leq N-1\bigskip \\ 
\forall t\in [ t^{n},t^{n+1}] ,\quad \g_{k}(
t) =\h_{k}(t) =\displaystyle\frac{t^{n+1}-t}{k}\left( \h
^{n+1}-\h^{n}\right) ,\quad 0\leq n\leq N-1\bigskip \\ 
\forall t\in [ t^{n},t^{n+1}] ,\quad \zeta _{k}(t)
-z_{k}(t) =\displaystyle\frac{t^{n+1}-t}{k}\left( z^{n+1}-z^{n}\right)
,\quad 0\leq n\leq N-1\bigskip \\ 
\forall t\in [ t^{n},t^{n+1}] ,\quad \lambda _{k}(t)
-z_{k}(t) =\displaystyle\frac{t^{n+1}-t}{k}\left( z^{n+1}-z^{n}\right)
,\quad 0\leq n\leq N-1.%
\end{array}%
\]%
Therefore%
\begin{equation}
\begin{array}{l}
\left\Vert \w_{k}-\u_{k}\right\Vert _{L^{2}(
0,T,V) }^{2}=\displaystyle\frac{k}{3}\sum_{n=0}^{N-1}\left\Vert \u
^{n+1}-\u^{n}\right\Vert _{H^{1}(\Omega) }^{2},\bigskip
\\ 
\left\Vert \g_{k}-\h_{k}\right\Vert ^{2}=\displaystyle\frac{k}{3}%
\sum_{n=0}^{N-1}\left\Vert \h^{n+1}-\h
^{n}\right\Vert _{H^{1}(\Omega) }^{2},\bigskip \\ 
\left\Vert \zeta _{k}-z_{k}\right\Vert _{L^{2}(\Omega \times ] 0,T%
[) }^{2}=\left\Vert \lambda _{k}-z_{k}\right\Vert _{L^{2}(
\Omega \times ] 0,T[) }^{2}=\displaystyle\frac{k}{3}
\sum_{n=0}^{N-1}\left\Vert z^{n+1}-z^{n}\right\Vert ^{2}.%
\end{array}
\label{E2}
\end{equation}%
Then, using the estimates (\ref{A2}),(\ref{A3}),(\ref{E0}) and the
uniqueness of the limit, we have $\w=\u,\g=\h$ and $\zeta =\lambda =z.$
It remains to prove the strong convergence (\ref{E1}). In view of (\ref{E2}%
), it suffices to prove the strong convergence of $\u_{k}$ and $%
\h_{k}.$ Note that, $(\u_{k}) $ and $(\h_{k}) $
are bouded uniformly in the space%
\[
\left\{ \v\in L^{2}(0,T;H_{0}^{1}(\Omega)
^{2}) ;\frac{\partial \v}{\partial t}\in L^{2}(0,T;L^{4}(
\Omega) ^{2}) \right\} , 
\]%
and as the imbedding of $H^{1}(\Omega) $ into $L^{4}(
\Omega) $ is compact, the Simon's theorem implies that $\u
_{k} $ and $\h_{k}$ converges strongly to $\u$ and $\h
$ respectively in $L^{2}(0,T;L^{4}(\Omega) ^{2}) .$
\end{proof}

\begin{theorem}
Let $\Omega $ be a bounded Lipschitz-continuous domain in two dimensions.
Then for any $\alpha >0,\upsilon >0,\f\in L^{2}(0,T;H( 
{\rm curl};\Omega)) $ and $\u_{0},\h_{0}\in V$
with ${\rm curl}\left( \u_{0}-\alpha \Delta \u_{0}\right)
\in L^{2}(\Omega) ,$ problem (\ref{MM2}) has at least one
solution $\u,\h\in L^{\infty }(0,T;V) $ with $%
\displaystyle\frac{\partial \u}{\partial t},\frac{\partial \h}{\partial t}\in
L^{2}(0,T;V) $ and $p,\omega\in L^{2}(0,T;L_{0}^{2}(
\Omega)) .$
\end{theorem}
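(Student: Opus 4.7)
The strategy is to pass to the limit $k \to 0$ in the semi-discrete scheme (\ref{A1}) after rewriting it in terms of the piecewise linear and step interpolants introduced above. Using $\partial_t \u_k = (\u^{n+1}-\u^n)/k$ on $[t^n, t^{n+1}]$ and the corresponding identifications for the other interpolants, the three equations of (\ref{A1}) become
\[
\frac{\partial}{\partial t}(\u_k - \alpha \Delta \u_k) - \nu \Delta \w_k + \lambda_k \times \w_k - \g_k \cdot \nabla \g_k = \f_k - \nabla(p_k + \g_k^{2}),
\]
\[
\frac{\partial \h_k}{\partial t} - \Delta \g_k + \w_k \cdot \nabla \g_k - \g_k \cdot \nabla \w_k = -\nabla \omega_k,
\]
\[
\alpha \frac{\partial z_k}{\partial t} + \nu \zeta_k + \alpha \w_k \cdot \nabla \zeta_k = \nu \, {\rm curl}\, \w_k + \alpha \, {\rm curl}\, \f_k + \alpha \, {\rm curl}(\g_k \cdot \nabla \g_k),
\]
together with ${\rm div}\, \w_k = {\rm div}\, \g_k = 0$ and the initial conditions $\u_k(0) = \u_0$, $\h_k(0) = \h_0$, $z_k(0) = z_0$. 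I would test the first two equations against arbitrary $\v \in V$ multiplied by a smooth scalar test function in time, test the third against a smooth scalar test function, integrate in $t$, and pass to the limit using the convergences obtained in the preceding proposition.

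The linear terms are handled directly by the weak and weak-$\ast$ convergences. The nonlinear terms require the strong convergences (\ref{E1}) coupled with the weak bounds. For the Lamb term I would rewrite $(\lambda_k \times \w_k, \v) = (\lambda_k, \w_k \times \v)$ and combine the weak-$\ast$ convergence $\lambda_k \rightharpoonup z$ in $L^\infty(0,T;L^2(\Omega))$ with the strong convergence of $\w_k \times \v$ in $L^2(0,T;L^2(\Omega))$ implied by (\ref{E1}) and H\"older. The magnetic convective contributions $\g_k \cdot \nabla \g_k$, $\w_k \cdot \nabla \g_k$ and $\g_k \cdot \nabla \w_k$ are treated analogously after one integration by parts: strong $L^2(L^4)$ convergence of one factor pairs with weak $L^\infty(H^1)$ convergence of the gradient of the other. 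For the transport term $\w_k \cdot \nabla \zeta_k$, whose second factor carries no gradient estimate, I would invoke the Green formula (\ref{N5}) of Theorem 4 to rewrite $(\w_k \cdot \nabla \zeta_k, \phi) = -(\zeta_k, \w_k \cdot \nabla \phi)$; the strong convergence of $\w_k$ then pairs with the weak-$\ast$ convergence of $\zeta_k$.

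Once the limit system holds against all divergence-free test functions, the pressures $p, \omega \in L^2(0,T;L_0^2(\Omega))$ are recovered via De Rham's theorem combined with the isomorphism of Theorem 1 at almost every $t$: the residual of each momentum equation annihilates $V$, hence is the gradient of an $L_0^2$ function, and the uniform $L^2(0,T;L_0^2(\Omega))$ bounds on $(p_k), (\omega_k)$ from the pressure proposition ensure the identification $p_k \rightharpoonup p$, $\omega_k \rightharpoonup \omega$. The initial conditions $\u(0) = \u_0$, $\h(0) = \h_0$ are transferred from the continuity in $t$ of $\u_k, \h_k$ together with the uniform $L^2(0,T;V)$ bound on $\partial_t \u_k, \partial_t \h_k$, which yields weak continuity of the limits at $t=0$.

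The main difficulty is the passage to the limit in the transport term $\alpha \w_k \cdot \nabla \zeta_k$: since only a weak-$\ast$ bound in $L^\infty(L^2)$ is available for $\zeta_k$, the Green formula (\ref{N5}) is essential to move the derivative onto the smooth test function. The resulting pairing of strong $L^2(L^4)$ convergence of $\w_k$ with weak-$\ast$ convergence of $\zeta_k$ then produces the limit $(\u \cdot \nabla z, \phi)$, completing the existence proof.
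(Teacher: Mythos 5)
Your proposal follows essentially the same route as the paper: pass to the limit in the semi-discrete weak formulation using the weak/weak-$\ast$ convergences for the linear terms, the strong $L^{2}(0,T;L^{4}(\Omega)^{2})$ convergence from Simon's theorem paired with weak convergence for the nonlinear terms, Green's formula (\ref{N5}) to move the derivative onto the test function $\theta\in W^{1,4}(\Omega)$ in the transport term, and an integration by parts in time to recover the initial data. The only cosmetic difference is that you test against divergence-free $\v$ and recover the pressures a posteriori via De Rham and Theorem 1, whereas the paper keeps the terms $-(p_{k},{\rm div}\,\v)$ and $-(\omega_{k},{\rm div}\,\g)$ in the discrete weak formulation and passes to the limit using the weak $L^{2}(0,T;L_{0}^{2}(\Omega))$ convergence of $p_{k}$ and $\omega_{k}$ directly; both are standard and equivalent here.
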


\begin{proof}
Let $k$ be a subsequence satisfying the convergences of above Proposition.
It is easy to check that the functions $\u_{k},\h%
_{k},z_{k,}p_{k},\omega_{k},\w_{k},\g_{k},\zeta _{k}$ and $%
\lambda _{k}$ satisfy the following formulations:%
\begin{equation}
\begin{array}{l}
\forall \v\in H_{0}^{1}(\Omega) ,\forall \varphi \in 
\mathcal{C}^{0}([ 0,T]) ,\quad \displaystyle\int_{0}^{T}%
\left[ \left( \frac{\partial }{\partial t}\u_{k}(t),%
\v\right) +\alpha \left( \frac{\partial }{\partial t}\nabla \u
_{k}(t) ,\nabla \v\right) \right. \bigskip  \\ 
\left. +\upsilon \left( \nabla \w_{k}(t) ,\nabla 
\v\right) +\left( \lambda _{k}(t) \times \w_{k}(
t) ,\v\right) -\left( p_{k}(t) ,{\rm div}\v
\right) \right] \varphi (t) dt\bigskip  \\ 
+\displaystyle\int_{0}^{T}\left( \g_{k}(t) \cdot \nabla 
\v,g_{k}(t) \right) \varphi (t)
dt=\displaystyle\int_{0}^{T}\left( \f_{k}(t) ,\v
\right) \varphi (t) dt,%
\end{array}
\label{E3}
\end{equation}%
\[
\]%
\begin{equation}
\begin{array}{l}
\forall \g\in H_{0}^{1}(\Omega) ,\forall \phi \in 
\mathcal{C}^{0}([ 0,T]) ,\quad \displaystyle\int_{0}^{T}%
\left[ \left( \frac{\partial }{\partial t}\h_{k}(t) ,%
\g\right) +\left( \nabla \g_{k}(t) ,\nabla \g \right) \right. \bigskip \\ 
-( \w_{k}(t) \cdot \nabla \g,\
\g_{k}(t)) +(\g_{k}(t) \cdot
\nabla \g,\w_{k}(t)) -( \w_{k},{\rm 
div}\,\g)] \phi (t) dt=0,%
\end{array}
\label{E4}
\end{equation}%
\[
\]%
\begin{equation}
\begin{array}{l}
\forall \theta \in W^{1,4}(\Omega) ,\forall \psi \in \mathcal{C}%
^{1}([ 0,T]) \,\,\,{\rm  with }\,\,\,\psi (T)
=0,\bigskip  \\ 
-\alpha \displaystyle\int_{0}^{T}\left( z_{k}(t) ,\theta \right) 
\frac{\partial }{\partial t}\psi (t) dt+\displaystyle\int_{0}^{T}%
\left[ \upsilon \left( \zeta _{k}(t) ,\theta \right) -\alpha
\left( \w_{k}(t) \cdot \nabla \theta ,\zeta _{k}(
t) \right) \right] \psi (t) dt\bigskip  \\ 
-\left( z^{0},\theta \right) \psi (0) =\displaystyle\int_{0}^{T}
\left[ \upsilon \left( {\rm curl}\w_{k}(t) ,\theta
\right) +\alpha \left( {\rm curl}\f_{k}(t) ,\theta
\right) \right] \psi (t) dt\bigskip  \\ 
+\displaystyle\int_{0}^{T}\alpha \left( {\rm curl}\left( \g_{k}(
t) \cdot \nabla \g_{k}(t) \right) ,\theta \right)
\psi (t) dt,%
\end{array}
\label{E5}
\end{equation}%
where we note that%
\[
\frac{\partial }{\partial t}\u_{k}=\frac{\partial }{\partial t}\left[
\u^{n}+\frac{t-t^{n}}{k}\left( \u^{n+1}-\u
^{n}\right) \right] =\frac{1}{k}\left( \u^{n+1}-\u
^{n}\right) 
\]%
Note that, the weak convergences of the proposition above, imply the
convergences of all the linear terms in (\ref{E3}),(\ref{E4}) and (\ref{E5})
and the terms involving $\f$ also converge, from standard
integration results. Thus, it suffices to check the convergence of the
non-linear terms. Then, for all indices $i$ and $j,$ $1\leq i,j\leq 2,$%
\[
\lim_{k\rightarrow 0}\left( w_{k}\right) _{i}v_{j}\varphi =u_{i}v_{j}\varphi
;\quad \lim_{k\rightarrow 0}\left( g_{k}\right) _{i}g_{j}\phi
=h_{i}g_{j}\phi \quad {\rm strongly \,\, in \,\,} \,L^{2}(\Omega \times ] 0,T%
[ ) 
\]%
and 
\[
\lim_{k\rightarrow 0}\lambda _{k}=z\quad {\rm weakly \,\, in \,\,}\,L^{2}\left( \Omega
\times \left] 0,T\right[ \right) ,
\]%
then, we have%
\[
\begin{array}{l}
\lim\limits_{k\rightarrow 0}\displaystyle\int_{0}^{T}\left( \lambda _{k}(
t) \times \w_{k}(t) ,\v\right) \varphi
(t) dt=\displaystyle\int_{0}^{T}\left( z(t) \times 
\u(t) ,\v\right) \varphi (t)
dt,\bigskip  \\ 
\lim\limits_{k\rightarrow 0}\displaystyle\int_{0}^{T}\left( \g
_{k}(t) \cdot \nabla \v,\g_{k}(t)
\right) \varphi (t) dt=\displaystyle\int_{0}^{T}\left( \h
(t) \cdot \nabla \v,\h(t) \right)
\varphi (t) dt,\bigskip  \\ 
\lim\limits_{k\rightarrow 0}\displaystyle\int_{0}^{T}\left( \w
_{k}(t) \cdot \nabla \g,\g_{k}(t)
\right) \phi (t) dt=\displaystyle\int_{0}^{T}\left( \u
(t) \cdot \nabla \g,\h(t) \right)
\phi (t) dt,\bigskip  \\ 
\lim\limits_{k\rightarrow 0}\displaystyle\int_{0}^{T}\left( \g
_{k}(t) \cdot \nabla \g,\w_{k}(t)
\right) \phi (t) dt=\displaystyle\int_{0}^{T}\left( \h
(t) \cdot \nabla \g,\u(t) \right)
\phi (t) dt.%
\end{array}%
\]%
Similary%
\[
\begin{array}{l}
\lim\limits_{k\rightarrow 0}\left( \w_{k}\cdot \nabla \theta \right)
\psi =\left( \u\cdot \nabla \theta \right) \psi \quad {\rm strongly \,\,
in \,\,}L^{2}(\Omega \times ] 0,T[ ), \bigskip  \\ 
\lim\limits_{k\rightarrow 0}\left( \g_{k}\cdot \nabla {\rm curl}%
\theta \right) \psi =\left( \h\cdot \nabla {\rm curl}\theta \right)
\psi \quad {\rm strongly \,\, in \,\,}L^{2}(\Omega \times ] 0,T[
) 
\end{array}%
\]%
Therefore%
\[
\begin{array}{l}
\lim\limits_{k\rightarrow 0}\displaystyle\int_{0}^{T}\left( \w_{k}\cdot
\nabla \theta ,\zeta _{k}\right) \psi (t)
dt=\displaystyle\int_{0}^{T}\left( \u\cdot \nabla \theta ,z(
t) \right) \psi (t) dt,\bigskip  \\ 
\lim\limits_{k\rightarrow 0}\displaystyle\int_{0}^{T}\left( \g_{k}\cdot
\nabla {\rm curl}\theta ,\g_{k}\right) \psi (t)
dt=\displaystyle\int_{0}^{T}\left( \h\cdot \nabla {\rm curl}\theta ,%
\h\right) \psi (t) dt.%
\end{array}%
\]%
Hence we can pass to the limit in (\ref{E3}), (\ref{E4}) and (\ref{E5}) and
we obtain%
\[
\begin{array}{l}
\forall \v\in H_{0}^{1}(\Omega) ,\forall \varphi \in 
\mathcal{C}^{0}([0,T]) ,\quad \displaystyle\int_{0}^{T}%
\left[ \left( \frac{\partial }{\partial t}\u(t) ,%
\v\right) +\alpha \left( \frac{\partial }{\partial t}\nabla \u
(t) ,\nabla \v\right) \right. \bigskip  \\ 
\left. +\upsilon \left( \nabla \u(t) ,\nabla \v
\right) +\left( z(t) \times \u(t) ,\v
\right) -\left( p(t) ,{\rm div}\v\right) \right]
\varphi (t) dt\bigskip  \\ 
+\displaystyle\int_{0}^{T}\left( \h(t) \cdot \nabla \v
,\h (t) \right) \varphi (t)
dt=\displaystyle\int_{0}^{T}\left( \f(t) ,\v\right)
\varphi (t) dt,%
\end{array}%
\]%
\[
\]%
\[
\begin{array}{l}
\forall \g\in H_{0}^{1}(\Omega) ,\forall \phi \in 
\mathcal{C}^{0}([0,T]) ,\quad \displaystyle\int_{0}^{T}%
\left[ \left( \frac{\partial }{\partial t}\h (t) ,\g
\right) +\left( \nabla \h (t) ,\nabla \g\right)
\right. \bigskip  \\ 
\left. -\left( \u(t) \cdot \nabla \g,\h
(t) \right) +\left( \h (t) \cdot \nabla 
\g,\u (t) \right) -\left( \omega,{\rm div}\g
\right) \right] \phi (t) dt=0,%
\end{array}%
\]%
\[
\]%
\[
\begin{array}{l}
\forall \theta \in W^{1,4}\left( \Omega \right) ,\forall \psi \in \mathcal{C}%
^{1}\left( \left[ 0,T\right] \right) \,\,\,{ with }\,\,\,\psi \left( T\right)
=0,\bigskip  \\ 
-\alpha \displaystyle\int_{0}^{T}\left( z (t) ,\theta \right) \frac{%
\partial }{\partial t}\psi (t) dt+\displaystyle\int_{0}^{T}\left[
\upsilon \left( z (t) ,\theta \right) -\alpha \left( \u
(t) \cdot \nabla \theta ,z(t) \right) \right] \psi
(t) dt\bigskip  \\ 
-\left( z^{0},\theta \right) \psi (0) =\displaystyle\int_{0}^{T}
\left[ \upsilon \left( {\rm curl}\u(t) ,\theta \right)
+\alpha \left( {\rm curl}\f (t) ,\theta \right) \right]
\psi (t) dt\bigskip  \\ 
+\displaystyle\int_{0}^{T}\alpha \left( {\rm curl}\left( \h(
t) \cdot \nabla \h(t) \right) ,\theta \right) \psi
(t) dt.%
\end{array}%
\]%
By choosing $\v,\g\in \mathcal{D}(\Omega) ^{2},\varphi
,\phi $ and $\psi \in \mathcal{D}(] 0,T[) $ and $%
\theta \in \mathcal{D}(\Omega) ,$ we easily recover (\ref{M4}).
It remain to recover the initial data, for this note for any $\g\in
L^{2}(\Omega) ^{2}$ and any $\phi \in H^{1}(0,T) $
satisfying $\phi (T) =0,$ and used the formula $\h
_{k}(t) =\h^{n}+\displaystyle\frac{t-t^{n}}{k}\left( \h
^{n+1}-\h^{n}\right) $ we have%
\[
\displaystyle\int_{0}^{T}\left( \frac{\partial }{\partial t}\h
_{k}(t) ,\g\right) \phi (t)
dt=-\displaystyle\int_{0}^{T}\left( \h_{k}(t) ,\g
\right) \frac{\partial }{\partial t}\phi (t) dt-\left( \h
^{0},\g\right) \varphi (0) 
\]%
Passing to the limit in the equality above, we have%
\[
\int_{0}^{T}\left( \frac{\partial }{\partial t}\h(
t) ,\g\right) \phi (t)
dt=-\displaystyle\int_{0}^{T}\left( \h(t) ,\g
\right) \frac{\partial }{\partial t}\phi (t) dt-\left( \h
^{0},\g\right) \varphi (0) .
\]%
On the other hand, we have%
\[
\int_{0}^{T}( \frac{\partial }{\partial t}\h(
t) ,\g) \phi (t)
dt=-\int_{0}^{T}( \h(t) ,\g
) \frac{\partial }{\partial t}\phi (t) dt- ( \h
(0) ,\g) \varphi (0) 
\]%
where we conclude that $\h^{0}=\h(0) ,$
similarly we obtain $\u^{0}=\u( 0) $ and $%
z^{0}=z(0) .$
\end{proof}

With respect to the uniqueness it is possible to show an analogous to the 
\cite{Girault}. In fact, we have

\begin{theorem}
Assume that $\Omega $ is a convex polygon. Then for any $\alpha >0,\upsilon
>0,$ $\f$ in $L^{2}(0,T;H({\rm curl};\Omega)) $
and $\u_{0}\in V,\h\in V$ with ${\rm curl}( \u
_{0}-\alpha \Delta \u_{0}) \in L^{2}(\Omega) ,$
problem (\ref{M1})-(\ref{MM1}) has exactly one solution $( \u,\h,p, \omega)
\in W.$
\end{theorem}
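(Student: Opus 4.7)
The plan is to get existence almost for free from the previous theorem, and then concentrate all the work on uniqueness, which is where the convex polygon hypothesis plays a nontrivial role.

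For existence, I would simply observe that a convex polygon is in particular a bounded Lipschitz-continuous domain, so Theorem~11 already produces a solution $(\u,\h,p,\omega)$ in the class $W$ claimed. In addition, the convex polygon regularity of Theorem~2(2) upgrades the Stokes part of this solution: because $\partial\u/\partial t - \alpha\Delta(\partial\u/\partial t) - \nu\Delta\u + \nabla p = \f + \h\cdot\n\h - z\times\u - \nabla\h^2$ and the right-hand side lies in $L^{r}(\Omega)^2$ for some $r>2$ (using the a priori bounds of Propositions~5--10 together with the Sobolev embedding $H^1\hookrightarrow L^q$ in two dimensions for every finite $q$), we get $\u\in L^\infty(0,T;W^{2,r}(\Omega)^2)$ and in particular $\n\u\in L^\infty(0,T;L^\infty(\Omega))$ via~(\ref{N3}). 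The analogous improved regularity is available for $\h$. This extra smoothness is exactly what the uniqueness argument needs.

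For uniqueness, I would follow the Girault strategy referenced by the authors. Suppose $(\u_i,\h_i,z_i,p_i,\omega_i)$, $i=1,2$, are two solutions and set $\U=\u_1-\u_2$, $\H=\h_1-\h_2$, $Z=z_1-z_2$, $P=p_1-p_2$, $\Om=\omega_1-\omega_2$. Subtracting the three equations of (\ref{M4}) gives
\begin{equation*}
\frac{\partial}{\partial t}(\U-\alpha\Delta\U)-\nu\Delta\U+z_1\times\U+Z\times\u_2=(\h_1\cdot\n)\H+(\H\cdot\n)\h_2-\n(P+\h_1^2-\h_2^2),
\end{equation*}
together with the magnetic and vorticity difference equations, all with zero initial data. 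Test the first equation by $\U$, the magnetic one by $\H$, and the vorticity one by $Z/\alpha$.

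The key estimates are then: $(z_1\times\U,\U)=0$, and the troublesome cross term is $|(Z\times\u_2,\U)|\le\|Z\|\,\|\u_2\|_{L^\infty}\,\|\U\|\le C\|\u_2\|_{W^{2,r}}\|Z\|\|\U\|$, which is controlled thanks to the improved Stokes regularity above. The $\H$-terms are handled with the bound $\|\h_2\|_{L^\infty}\le C\|A\h_2\|^{1/2}\|\n\h_2\|^{1/2}$ already established in Corollary~7. For the vorticity equation the only nonlinear obstruction is $\alpha(\U\cdot\n z_2,Z)$, which we rewrite using the Green formula (\ref{N5}) as $-\alpha(\U\cdot\n Z,z_2)$ and bound by $C\|\U\|_{L^4}\|\n Z\|_{?}\|z_2\|_{L^4}$; but since $Z$ is only in $L^2$, we instead keep the term as $(\U\cdot\n z_2,Z)$ and estimate it by $\|\U\|_{L^\infty}\|\n z_2\|\,\|Z\|$, using ${\rm curl}(\u_i-\alpha\Delta\u_i)\in L^\infty(0,T;L^2)$ and the convex-polygon bound $\|\U\|_{L^\infty}\le\mathcal{M}_r|\U|_{W^{1,r}}\le C\|\U\|_\alpha^{1/2}\|\U\|_\alpha^{1/2}$ (combined with the uniform bound on $\U$). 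Adding everything produces a differential inequality
\begin{equation*}
\frac{d}{dt}\bigl(\|\U\|_\alpha^2+\|\H\|^2+\|Z\|^2\bigr)\le \varphi(t)\bigl(\|\U\|_\alpha^2+\|\H\|^2+\|Z\|^2\bigr)
\end{equation*}
with $\varphi\in L^1(0,T)$, and Gronwall's lemma together with $\U(0)=\H(0)=Z(0)=0$ forces $\U\equiv\H\equiv Z\equiv 0$. Uniqueness of $P$ and $\Om$ then follows from the divergence isomorphism of Theorem~1 applied in $V^\perp$.

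The main obstacle I expect is the vorticity cross term $(\U\cdot\n z_2,Z)$: it is the term that really distinguishes the grade-two model from the Navier--Stokes/MHD situation, and it is the only one that prevents a closed energy estimate at the $L^2$ level. This is precisely where convexity is indispensable, because the Stokes $W^{2,r}$-regularity (with $r>2$) together with the embedding $W^{1,r}\hookrightarrow L^\infty$ is what makes $\|\U\|_{L^\infty}$ controllable by $\|\U\|_\alpha$, and without it the coefficient $\varphi(t)$ above would fail to be integrable.
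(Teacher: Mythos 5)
First, a point of comparison you could not have known: the paper does not actually prove this theorem. After the existence result it says only that uniqueness ``is possible to show'' analogously to \cite{Girault} and records the statement with no proof at all (even the solution class $W$ is left undefined). So your proposal can only be measured against the Girault--Saadouni strategy the paper invokes, and your skeleton --- existence from the Lipschitz-domain theorem, improved regularity from convexity, a Gronwall argument on differences, recovery of the pressures via Theorem 1 --- is indeed that strategy. The execution, however, fails at precisely the two decisive points. The fatal one is your treatment of the vorticity cross term: the bound $\left\vert \left( (\u_1-\u_2)\cdot \nabla z_2 , z_1-z_2\right)\right\vert \leq \Vert \u_1-\u_2\Vert_{L^{\infty}}\,\Vert \nabla z_2\Vert\, \Vert z_1-z_2\Vert$ requires $\nabla z_2\in L^{2}(\Omega)$, which amounts to $\u_2\in H^{3}(\Omega)$; convex-polygon regularity gives no such thing, and $z_2={\rm curl}(\u_2-\alpha\Delta\u_2)$ lies only in $L^{\infty}(0,T;L^{2}(\Omega))$, so $\Vert\nabla z_2\Vert$ is simply not finite. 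You correctly noticed that Green's formula (\ref{N5}) is not directly applicable at this regularity, but your replacement merely transfers the illegal spatial derivative from $z_1-z_2$ onto $z_2$. The known resolution is to \emph{justify} the Green formula for this pairing by mollification and a Friedrichs-type commutator argument, which is exactly where $\nabla\u_i\in L^{1}(0,T;L^{\infty}(\Omega))$ --- hence convexity --- enters; that idea is missing from your proposal. The second gap is the chain $\Vert \u_1-\u_2\Vert_{L^{\infty}}\leq \mathcal{M}_r \vert \u_1-\u_2\vert_{W^{1,r}(\Omega)}\leq C\Vert \u_1-\u_2\Vert_{\alpha}$: this is false, since $\Vert\cdot\Vert_{\alpha}$ is an $H^{1}$-level norm and in two dimensions $H^{1}$ embeds in neither $W^{1,r}$ with $r>2$ nor $L^{\infty}$. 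Interpolating between $\Vert\cdot\Vert_{\alpha}$ and the uniform $W^{2,r}$ bound yields only a sublinear power of $\Vert \u_1-\u_2\Vert_{\alpha}$, which breaks the linear Gronwall inequality you wrote down (and you do not supply the nonlinear Osgood-type substitute that would then be needed).

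There is also a flaw in how you obtain the improved regularity itself. You apply steady Stokes regularity (Theorem 2(2)) to the momentum equation with $\partial_t(\u-\alpha\Delta\u)$ moved into the data, but $\Delta(\partial\u/\partial t)$ is only in $L^{2}(0,T;H^{-1}(\Omega))$, not in $L^{r}(\Omega)^{2}$ for a.e.\ $t$, so the hypothesis of Theorem 2 is not met. The correct derivation, as in \cite{Girault}, works from $z\in L^{\infty}(0,T;L^{2}(\Omega))$ alone: solve $\Delta\psi=z(t)$ with $H^{2}$ regularity on the convex polygon, set $\g={\rm curl}\,\psi\in H^{1}(\Omega)\hookrightarrow L^{r}(\Omega)$, observe that $\u-\alpha\Delta\u-\g$ is curl-free and hence a gradient, and apply the Stokes-type regularity to $\u-\alpha\Delta\u+\nabla q=\g$; this yields $\u\in L^{\infty}(0,T;W^{2,r}(\Omega)^{2})$ for some $r>2$ and thereby the $L^{\infty}$ bounds on $\u_i$ and $\nabla\u_i$ that the energy estimates require. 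With these repairs your outline (test with the differences, absorb, Gronwall with an $L^{1}$-in-time coefficient, then uniqueness of $p$ and $\omega$ from Theorem 1) is the right one, but as written the argument does not close at either the cross term or the $L^{\infty}$ control of the velocity difference.
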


\end{document}